\newcommand{\cA}{\mathcal{A}}
\renewcommand{\AA}{\mathbb{A}}
\newcommand{\CC}{\mathbb{C}}
\newcommand{\FF}{\mathbb{F}}
\newcommand{\GG}{\mathbb{G}}
\newcommand{\cJ}{\mathcal{J}}
\newcommand{\cK}{\mathcal{K}}
\newcommand{\cL}{\mathcal{L}}
\newcommand{\LL}{\mathbf{L}}
\newcommand{\cM}{\mathcal{M}}
\newcommand{\p}{\mathfrak{p}}
\newcommand{\cP}{\mathcal{P}}
\newcommand{\PP}{\mathbb{P}}
\newcommand{\RR}{\mathbb{R}}
\newcommand{\ZZ}{\mathbb{Z}}
\newcommand{\QQ}{\mathbb{Q}}
\newcommand{\OO}{\mathcal{O}}
\newcommand{\cZ}{\mathcal{Z}}
\newcommand{\perf}{^\textsf{perf}}
\newcommand{\Ab}{\textsf{Ab}}
\newcommand{\Cpx}{\textsf{Cpx}}
\newcommand{\Sm}{\textsf{Sm}}
\newcommand{\SmCor}{\textsf{SmCor}}
\newcommand{\SmProj}{\textsf{SmProj}}
\newcommand{\Gal}{\textsf{Gal}}
\newcommand{\ProSS}{\textsf{Pro-}\mathcal{S}}
\newcommand{\HI}{\textsf{HI}}
\newcommand{\NMV}{\textsf{NMV}}
\newcommand{\un}{\mathbbm{1}}
\newtheorem{theo}{Theorem}
\newtheorem{prop}[theo]{Proposition}
\newtheorem*{prop*}{Proposition}
\newtheorem{lemm}[theo]{Lemma}
\newtheorem{coro}[theo]{Corollary}
\newtheorem{conj}[theo]{Conjecture}
\theoremstyle{definition}
\newtheorem{defi}[theo]{Definition}
\newtheorem{rema}[theo]{Remark}
\newtheorem{exam}[theo]{Example}
\newtheorem{warn}[theo]{Warning}
\newtheorem{gues}[theo]{Guess}
\newtheorem{heurDefi}[theo]{{Heuristic ``Definition''}}
\newtheorem{cons}[theo]{Construction}
\newtheorem{obse}[theo]{Observation}
\newtheorem*{theoUn}{Theorem}
\DeclareMathOperator{\id}{id}
\DeclareMathOperator{\Spc}{Spc}
\DeclareMathOperator{\Spec}{Spec}
\DeclareMathOperator{\Proj}{Proj}
\DeclareMathOperator{\SH}{SH}
\DeclareMathOperator{\Ho}{Ho}
\DeclareMathOperator{\supp}{supp}
\DeclareMathOperator{\coker}{coker}
\DeclareMathOperator{\im}{im}
\DeclareMathOperator{\Cone}{Cone}
\DeclareMathOperator{\num}{num}
\DeclareMathOperator{\rat}{rat}
\renewcommand{\top}{\mathsf{top}}
\title{Some observations about motivic tensor triangulated geometry over a finite field}
\author{Shane Kelly}
\begin{document}

\maketitle

\section*{Introduction}

These are notes based on three lectures I gave at the workshop ``Bousfield classes form a set: 
a workshop in memory of Tetsusuke Ohkawa'' at Nagoya University in August 2015. 

The goal of the lectures was to give a brief sketch of the Morel-Voevodsky stable homotopy category $SH(S)$ and motivic stable homotopy groups of spheres, aimed at someone with no previous experience with motives, and then in the last lecture see if anything could be said about the tensor triangulated spectrum $\Spc(SH(S)^c)$ of $SH(S)$. I expected, perhaps naïvely, $\Spc(SH(S)^c)$ to be completely intractable, but to my surprise, it is possible to give a conjectural description of $\Spc(SH(\FF_q)^c_\QQ)$. 

\begin{theoUn}[{Theorem~\ref{theo:main}}]
Let $\FF_q$ be a field with a prime power, $q$, number of elements. Suppose that for all connected smooth projective varieties $X$ we have:
\[
\begin{array}{rllr}
CH^i(X, j)_\QQ &= 0,  & \forall j \neq 0, i \in \ZZ & \textrm{(Beilinson-Parshin conjecture)}, \\
CH^i(X)_\QQ \otimes CH_i(X)_\QQ &\to CH_0(X)_\QQ & \textrm{ is non-degenerate.}%\footnote{%
%By non-degenerate we mean the following. If the image of $\alpha \otimes \beta$ is zero for all $\alpha$ (resp. for all $\beta$) then $\beta$ (resp. $\alpha$) is zero.
%}%
 & \textrm{(Rat.\ and num.\ equiv.\ agree)} 
\end{array}
\]
Then 
\[ \Spc(SH(\FF_q)^c_\QQ)  \cong \Spec(\QQ).\]
\end{theoUn}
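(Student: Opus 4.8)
The plan is to reduce, in two steps, to the Balmer spectrum of the bounded derived category of a \emph{semisimple} abelian tensor category, where the spectrum is forced to be a point. \emph{Step 1: pass from $SH$ to $DM$.} First I would invoke Morel's decomposition $SH(\FF_q)_\QQ \simeq SH(\FF_q)_\QQ^{+}\times SH(\FF_q)_\QQ^{-}$, compatible with compact objects, in which the plus part is Voevodsky's category of motives, $SH(\FF_q)_\QQ^{+}\simeq DM(\FF_q,\QQ)$ (again compatibly with compacts), and the minus part is $\eta$-inverted and Witt-theoretic, with $\mathrm{End}$ of its unit equal to $W(\FF_q)_\QQ$. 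Since $\FF_q$ is a finite field, $W(\FF_q)$ is a finite ($2$-primary) ring, so $W(\FF_q)_\QQ=0$ and $SH(\FF_q)_\QQ^{-}=0$. Hence $SH(\FF_q)^{c}_\QQ \simeq DM_{gm}(\FF_q,\QQ)$ as tensor triangulated categories, and it suffices to compute $\Spc(DM_{gm}(\FF_q,\QQ))$.

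\emph{Step 2: identify $DM_{gm}(\FF_q,\QQ)$ with $D^{b}$ of numerical motives.} Next I would put Bondarko's Chow weight structure on $DM_{gm}(\FF_q,\QQ)$, whose heart is the category $\mathcal{M}_{\mathrm{rat}}(\FF_q)_\QQ$ of rational Chow motives with Tate twists. Using duality of smooth projective motives one has $\mathrm{Hom}_{DM}(M(X)(a)[2a],\,M(Y)(b)[2b+n])\cong CH^{\,\dim Y+b-a}(X\times Y,\,-n)_\QQ$, and since $X\times Y$ is again smooth projective, the Beilinson--Parshin hypothesis says precisely that this vanishes whenever $n\neq 0$; i.e.\ there are no higher $\mathrm{Ext}$'s between Chow motives over $\FF_q$. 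A standard weight-structure argument then promotes the weight-complex functor to an equivalence $DM_{gm}(\FF_q,\QQ)\simeq K^{b}(\mathcal{M}_{\mathrm{rat}}(\FF_q)_\QQ)$. The second hypothesis --- that rational and numerical equivalence agree, which via the surjections $CH^{*}\twoheadrightarrow CH^{*}_{\mathrm{hom}}\twoheadrightarrow CH^{*}_{\mathrm{num}}$ forces all three to coincide --- identifies $\mathcal{M}_{\mathrm{rat}}(\FF_q)_\QQ=\mathcal{M}_{\mathrm{num}}(\FF_q)_\QQ$, which by Jannsen's theorem is a semisimple $\QQ$-linear abelian category; in particular $K^{b}=D^{b}$ and every object of $DM_{gm}(\FF_q,\QQ)$ is a finite direct sum of shifts of numerical motives. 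Also $\mathrm{End}_{DM}(\un)=H^{0}_{\mathcal{M}}(\FF_q,\QQ(0))=\QQ$.

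\emph{Step 3: the spectrum of $D^{b}$ of a semisimple tensor category.} Finally I would prove the soft statement that, for an essentially small semisimple $\QQ$-linear abelian rigid symmetric monoidal category $\mathcal{A}$ with $\mathrm{End}(\un)$ a field, $\Spc(D^{b}(\mathcal{A}))$ is a single point --- applicable here since $\mathcal{M}_{\mathrm{num}}(\FF_q)_\QQ$ is rigid (geometric rational motives are dualizable, via de Jong alterations). Indeed, let $\mathcal{I}$ be a nonzero thick tensor ideal and choose $0\neq M\in\mathcal{I}$; decomposing $M$ into shifts of simple objects shows some simple $L\in\mathcal{A}$ lies in $\mathcal{I}$, hence so does $L\otimes L^{\vee}$. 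But $\mathrm{Hom}_{\mathcal{A}}(\un,L\otimes L^{\vee})\cong\mathrm{End}_{\mathcal{A}}(L)\neq 0$, so $\un$ is a direct summand of $L\otimes L^{\vee}$ in the semisimple category $\mathcal{A}$, whence $\un\in\mathcal{I}$ and $\mathcal{I}$ is everything. Thus $0$ and $D^{b}(\mathcal{A})$ are the only thick tensor ideals, so $\Spc$ is a point, and as $\mathrm{End}(\un)=\QQ$ is a (reduced) field, Balmer's structure sheaf identifies this point with $\Spec(\QQ)$.

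\emph{Main obstacle.} The genuinely conjectural input is exactly the two displayed hypotheses, which we are granted, so the remaining work is essentially bookkeeping: Step 2 is where the hypotheses are used and where the most care is needed --- matching Beilinson--Parshin precisely with the vanishing of higher $\mathrm{Ext}$'s between Chow motives over $\FF_q$, and then deploying Bondarko's weight-structure formalism together with Jannsen's semisimplicity correctly. Step 1 rests on nontrivial but off-the-shelf results (Morel's $\pm$-decomposition, $SH_\QQ^{+}\simeq DM$, and the Witt description of the minus part), and Step 3 is formal once semisimplicity and rigidity are in hand, the one delicate point there being the identification of the structure sheaf on the one-point space with $\Spec(\QQ)$.
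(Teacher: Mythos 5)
Your overall architecture (reduce to $DM\gm(\FF_q)_\QQ$ via Morel, identify it with the bounded homotopy category of semisimple numerical motives using the two hypotheses, then kill all nonzero primes by rigidity plus semisimplicity of the heart) is the same as the paper's, and your Steps 1 and 3 are essentially identical to Theorem~\ref{theo:morel} and Lemma~\ref{lemm:rigssfield} respectively. The genuine divergence is Step 2. You go \emph{from} $DM\gm(\FF_q)_\QQ$ \emph{to} $K^b(\cM_{\rat}(\FF_q)_\QQ)$ via Bondarko's weight complex functor $t$; this is exactly the ``conceptual approach'' the paper describes in the remark following its proof and then deliberately declines to use, because for the conclusion about $\Spc$ one needs $t$ to be an equivalence of \emph{tensor} triangulated categories, and the monoidality of the weight complex functor is precisely the point the paper says it does not want to assert without proof or reference. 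Your phrase ``a standard weight-structure argument then promotes the weight-complex functor to an equivalence'' silently passes over this: Balmer's spectrum is only functorial for $\otimes$-exact functors, so a mere triangulated equivalence does not transport primes. The paper sidesteps the issue by building the comparison in the opposite direction: since $\cM(\FF_q)_\QQ$ is semisimple, $K^b(\cM(\FF_q)_\QQ) \cong \bigoplus_{i\in\ZZ}\cM(\FF_q)_\QQ$, so the known tensor embedding $\cM_{\rat}(\FF_q)_\QQ \to DM\gm(\FF_q)_\QQ$ of Theorem~\ref{theo:dmtheo} extends for free to a tensor triangulated functor $\bigoplus_{i\in\ZZ}\cM(\FF_q)_\QQ \to DM\gm(\FF_q)_\QQ$, which is then fully faithful by Beilinson--Parshin (the same $CH^*(X\times Y,-n)$ vanishing you use) and essentially surjective because $DM\gm$ is generated by motives of smooth projective varieties. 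I would recommend you either adopt that direction of argument, or explicitly supply (or cite) the monoidality of $t$; as written, Step 2 has a hole at exactly the spot the paper flags. Two small further remarks: rigidity of $\cM_\sim(k)$ is elementary (transpose of correspondences), no alterations are needed; and in Step 3, to get that $\Spc$ is a single point rather than empty you should also note that $(0)$ is itself prime (equivalently, that the category is nonzero), which your summand argument in fact yields.
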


Later, I found out there are a considerable number of results about $\Spc(SH(k)^c)$ for $k \subseteq \CC$ in \cite{Joa15}, using methods adapted from the study of the classical stable homotopy category. Another paper studying this object is \cite{HO16}, where it is proven that Balmer's comparison map is surjective (for any field of non-even characteristic!). They also address what is possibly one of the most important questions in this area of study---the production of field spectra.

For a speculative discussion about the structure of $\Spc(SH(\FF_q)^c)$ see Section~\ref{sec:observations}.

In this last section we also prove that Balmer's comparison map
\[ \Spc(SH(\FF_q)^c) \to \Spec^h(K^{MW}(\FF_q)) \]
is surjective in the special case of finite fields of prime characteristic, Corollary~\ref{coro:compMapEpi}. To do this we exploit the fact that we can describe $\Spec^h(K^{MW}(\FF_q))$ completely---an approach due to Ormsby.

%Due to my background as a positive characteristic motivist, my first reflex was to convert this question to a question about categories of motives in positive characteristic. On the other hand, one could look at the work done on the tensor triangulated spectrum of the classical stable homotopy category and try and extend these methods to the motivic setting. For example, if the base field is not algebraically closed, one could use equivariant stable homotopy theory to look for non-zero primes, one could study which non-zero primes come from motivic Morava $K$-theories, and one could find for different lifts of topological type-$n$ spectra to the motivic world and to ask whether they generate the same thick ideals or different ones. These approaches are carried out by Joachimi in \cite{Joa15}.

\emph{Outline.} The first section contains the basic definitions of tensor triangulated geometry, and references to the literature. It is aimed at the motivic reader who has had minimal to no exposure to these things.

The second section contains basic definitions of various motivic categories, and references to the literature. It is aimed at the tensor triangulated geometer who has had minimal to no exposure to these things.

The last section contains some observations and guesses about the structure of $\Spc(SH(\FF_q)^c)$. In particular, it contains Theorem~\ref{theo:main} and its proof.

\emph{Acknowledgements.} I thank the organisers of the conference ``Bousfield classes form a set: 
a workshop in memory of Tetsusuke Ohkawa'' for the invitation to speak which led me to think about these things, and also for having organised such an interesting conference. I also thank Paul Balmer, Jens Hornbostel, and Denis-Charles Cisinski for interesting discussions about potential future work, Marc Hoyois for discussions about the étale homotopy type, and Jeremiah Heller and Kyle Ormsby for pointing out that an ``elementary fact'' I was using in the proof of Proposition~\ref{prop:epi} is actually a combination of theorems of Ayoub, Balmer, Gabber, and Riou.

\section{Tensor triangulated geometry}

In this section we recall some basic definitions from tensor triangulated geometry. For a much more readable exposition of this material the reader is encouraged to consult \cite{BalTTG10}. 

\begin{exam}
The most enlightening example to keep in mind when reading the following definitions is the bounded derived category of coherent sheaves on smooth variety $X$ over a field, or equivalently, the derived category $D\perf(X)$ of perfect complexes on $X$. Of course, there are other important examples coming from scheme theory, stable homotopy theory, modular representation theory, noncommutative topology, and, as we shall see below, the theory of motives, \cite[\S1]{BalTTG10}.
\end{exam}

\begin{warn}
Due to the similarity in the definition of the spectrum $\Spec(R)$ 
of a commutative ring $R$, 
and the spectrum $\Spc(\cK)$ of a $\otimes$-triangulated category $\cK$, 
it is tempting to try and blindly transfer all notions from commutative algebra to tensor triangulated geometry. However, this similarity is a red herring. Whereas the elements of a ring $R$ 
behave like functions, the objects in a $\otimes$-triangulated $\cK$ 
behave like (bounded complexes of) sheaves: a closed subset of $\Spec(R)$ corresponds to the set of functions vanishing on it, whereas a closed subset of $D\perf(\Spec(R))$ corresponds to the set of (perfect complexes of) sheaves supported on it. The first manifestation of this is that the correspondence between ideals and closed subsets is inclusion preserving in the $\otimes$-triangulated world, where it is inclusion reversing in the world of commutative algebra. Cf. also Proposition~\ref{prop:firstProp}\eqref{prop:firstProp:open} and \eqref{prop:firstProp:closed} below. For a more surprising and serious example of this phenomenon see \cite[Rmk{.}27]{BalTTG10}.%, where Balmer remarks that the condition of being ``integral'', i.e., $a \otimes b = 0 \Rightarrow a$ or $b = 0$, in $\otimes$-triangulated geometry actually corresponds to the category being ``local''. 
\end{warn}

\begin{defi} \label{defi:basicDef}
Let $(\cK, \otimes, \un)$ be an essentially small $\otimes$-triangulated category, i.e., a triangulated category equipped with a monoidal structure $\cK \times \cK \stackrel{\otimes}{\to} \cK$ with a unit object $\un$, such that $\otimes$ is exact in each variable, \cite[Def{.}3]{BalTTG10}. Let $\cJ$ be a non-empty full triangulated subcategory.
\begin{enumerate}
 \item $\cJ$ is called \emph{thick} if it is stable under direct summands; $a \oplus b \in \cJ \Rightarrow a \in \cJ$ or $b \in \cJ$, \cite[Def{.}7]{BalTTG10}. For any closed subvariety $Z$ of a smooth variety $X$, the subcategory of objects supported on $Z$, i.e., objects sent to zero by the canonical functor $D\perf(X) \to D\perf(X{-}Z)$, is thick.

 \item $\cJ$ is said to be a \emph{$\otimes$-ideal} if $\cK \otimes \cJ \subseteq \cJ$, \cite[Def{.}7]{BalTTG10}. The subcategories $ker(D\perf(X) \to D\perf(X{-}Z))$ just mentioned are tensor ideals. For an example of a thick subcategory which is not a tensor ideal, the reader could consider using the Fourier-Mukai transform \cite[Prop{.}9.19]{Huy06} between the derived category of an abelian variety and its dual, as this preserves thick subcategories, but not the tensor structure. 

 \item A \emph{prime} of $\cK$ is a thick $\otimes$-ideal $\cP$ such that $\un \notin \cP$ and $a \otimes b \in \cP \Rightarrow a \in \cP$ or $b \in \cP$, \cite[Constr{.}8]{BalTTG10}. The set of primes is denoted by $\Spc(\cK)$. The ideals $ker(D\perf(X) \to D\perf(X{-}Z))$ mentioned above are prime if and only if $Z$ is irreducible.

 \item \label{defi:basicDef:supp} The \emph{support}, denoted by $\supp(a)$, of an object $a \in \cK$ is the set of primes not containing it,
 \[ \supp(a) = \{ \cP \in \Spc(\cK) : a \notin \cP \}. \]
 The complement of $\supp(a)$ is denoted by $U(a) = \{ \cP \in \Spc(\cK) : a \in \cP \}$, \cite[Constr{.}8]{BalTTG10}.

 \item The set $\Spc(\cK)$ has a canonical topology with basis the sets $U(a)$ as $a$ ranges over all objects in $\cK$, \cite[Constr{.}8]{BalTTG10}. 
 
 \item To a subset $Y$ of $\Spc(\cK)$, we associate the full subcategory 
 \[ \cK_Y = \{ a \in \cK : \supp(a) \subseteq Y \}. \]
 If $Y$ is a union $Y = \cup_{i \in I} Y_i$ of subsets $Y_i$ whose complement $\Spc(\cK) - Y_i$ is open and \emph{quasi-compact} (in the sense that every open cover admits a finite subcover), then $\cK_Y$ is a thick $\otimes$-ideal of $\cK$, \cite[Rmk{.}12, Thm{.}14]{BalTTG10}.
 
 \item If $U$ is a quasi-compact open of $\Spc(\cK)$, with closed complement $Z = \Spc(\cK) - U$, one defines the tensor triangulated category
 \[ \cK(U) = (\cK / \cK_Z)^\natural \]
 as the idempotent completion of the Verdier localisation of $\cK$ by $\cK_Z$. An inclusion $V \subseteq U$ of such opens comes with a canonical $\otimes$-triangulated functor $\cK(U) \to \cK(V)$, \cite[Constr{.}24]{BalTTG10}.

 \item The sheafification of the assignment sending a quasi-compact open $U$ to $\hom_{\cK(U)}(\un, \un)$ is denoted by $\OO_\cK$. This gives $\Spc(\cK)$ the structure of a \emph{ringed space}, and, at least when $\cK$ is rigid%
\footnote{\label{footnote:rigid}An object $a$ in a $\otimes$-category is called \emph{strongly dualisable} if there exists an object $Da$ such that $a \otimes-$ is left adjoint to $(Da) \otimes-$. A $\otimes$-category is called \emph{rigid} if every object is strongly dualisable.} %
 and idempotent complete, $(\Spc(\cK), \OO_K)$ is a \emph{locally ringed space}, \cite[Cor{.}6.6]{BalSSS10}. This ringed space is referred to as the \emph{spectrum} of $\cK$, \cite[Constr{.}29]{BalTTG10}.
\end{enumerate}
\end{defi}

\begin{exam}\ \label{exam:spc}
\begin{enumerate}
 \item Let $X$ be a quasi-compact quasi-separated scheme (for example a variety over a field). Then there is an isomorphism of locally ringed spaces $X \cong \Spc(D^{\textsf{perf}}(X))$ where $D^{\textsf{perf}}(X)$ is the derived category of perfect complexes on $X$, \cite[Thm.54]{BalTTG10}.

 \item Let $SH_\top$ be the classical stable homotopy category. Then $\Spc(SH_\top^c)$ is
 \begin{equation} \label{equa:spcSHtop}
  \xymatrix@!=6pt{
\cP_{2, \infty} \ar@{-}[d] & \cP_{3, \infty} \ar@{-}[d] & \cP_{5, \infty} \ar@{-}[d] & \cP_{7, \infty} \ar@{-}[d] & \dots  \\
\vdots \ar@{-}[d] & \vdots \ar@{-}[d] &\vdots \ar@{-}[d] &\vdots \ar@{-}[d] &   \\
\cP_{2, 2} \ar@{-}[d] & \cP_{3, 2} \ar@{-}[d] & \cP_{5, 2} \ar@{-}[d] & \cP_{7, 2} \ar@{-}[d] & \dots  \\
\cP_{2, 1} \ar@{-}[drr] & \cP_{3, 1} \ar@{-}[dr] & \cP_{5, 1} \ar@{-}[d] & \cP_{7, 1} \ar@{-}[dl] & \dots  \ar@{-}[dll] & \ar@{-}[dlll]  \\
&& \cP_{0,1} &&&
 } 
 \end{equation}
The lines indicate that the higher prime is in the closure of the lower one. For every prime number $p$ and every $n \geq 1$, the prime $\cP_{p,n}$ of $SH_\top^c$ s the kernel of the $n$th Morava $K$-theory (composed with localisation at $p$) and $\cP_{p, \infty} = \cap_{n \geq 1} \cP_{p,n}$ is the kernel of localisation at $p$. The generic point $\cP_{0,1} = (SH_\top^c)_\textsf{tor} = Ker(H(-, \QQ))$ is the kernel of singular cohomology with $\QQ$-coefficients, Hopkins-Smith \cite{HS98}, \cite[Cor.9.5]{BalSSS10}, \cite[Thm.51]{BalTTG10}.

 \item \label{exam:spc:equi} Let $G$ be a finite group, and let $SH_G$ be the $G$-equivariant stable homotopy category. For a subgroup $H \subseteq G$ let $\Phi^H: SH_G^c \to SH_\top^c$ denote the geometric $H$-fixed points functor. Then, \emph{as a set}, 
 \[ 
 \Spc(SH_G^c) = \left \{ \cP(H, p, n) \stackrel{def}{=} (\Phi^H)^{-1}\cP_{p,n} : H \leq G, \cP_{p,n} \in \Spc(SH_\top^c) \right \}. 
 \]
  Furthermore, $\cP(H, p, n) = \cP(H', p', n')$  if and only if $H$ and $H'$ are conjugate in $G$, and $p = p', n = n'$. For more details see \cite{BS15}.
  
  In the case of a cyclic group $G = \ZZ / n$, the space $\Spc(\SH_G^c)$ contains a copy of $\Spc(\SH_\top^c)$ for every $m$ dividing $n$, including $1$ and $n$. Over $\Spec(\ZZ[1 / n])$, the copies are disjoint, but there are some specialisation-generisation relations between the points lying over $\Spec(\ZZ / n)$, cf. \cite[Eq.1.3]{BS15}.
\end{enumerate}
\end{exam}

Just as schemes admit a canonical comparison morphism to the spectrum of their global sections, there are canonical comparison morphisms from $\Spc(\cK)$ to the spectrum of the ring of endomorphisms of the unit object.

\begin{theo}[{\cite[Thm{.}5.3, Cor{.}5.6, Thm{.}7.13, Not{.}3.1]{BalSSS10}}] \label{theo:structureMap}
Let $\cK$ be an essentially small $\otimes$-triangulated category and $u \in \cK$ an invertible object. There are two continuous maps of topological spaces
\[ \rho_\cK: \Spc(\cK) \to \Spec \biggl ( \hom_\cK(\un, \un) \biggr ), \qquad \rho^\bullet_\cK: \Spc(\cK) \to \Spec^h \biggl ( \oplus_{n \in \ZZ} \hom_\cK(\un, u^n) \biggr ). \]
Here, $\Spec^h$ indicates the set of proper homogeneous ideals which are prime.\footnote{Recall that $\Proj$ of a non-negatively graded ring is the set of those proper homogeneous prime ideals \emph{which don't contain all elements of positive degree}. There is no such exclusion in the definition of $\Spec^h$.} It is equipped with the topology whose closed sets are of the form $V(I^\bullet) = \{ \p^\bullet \in \Spec^h : I^\bullet \subseteq \p^\bullet \}$ for homogeneous ideals $I^\bullet$, \cite[Rmk{.}3.4]{BalSSS10}.

Futhermore, if $\hom_{\cK}(\un[i], \un) = 0$ for $i < 0$, then $\rho_\cK$ is surjective. In the case $u = \un[1]$, and the graded endomorphism ring is coherent (e.g., noetherian), the map $\rho^\bullet_\cK$ is surjective.

For any morphism $s: \un \to u^n$, the premiage of the principle open $D(s)$ of homogeneous primes not containing $s$ is the open $U(Cone(s))$ of primes of $\cK$ containing $Cone(s)$.
\end{theo}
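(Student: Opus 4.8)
The plan is to reconstruct the arguments of \cite[\S5,\,\S7]{BalSSS10}. First I would fix the construction: both comparison maps come from the single recipe ``take the cone''. For $\cP\in\Spc(\cK)$ set
\[
\rho_\cK(\cP)=\{\,f\in\hom_\cK(\un,\un) : \Cone(f)\notin\cP\,\},
\]
and in the graded case let $\rho^\bullet_\cK(\cP)$ be the homogeneous ideal generated by the homogeneous $s$ with $\Cone(s)\notin\cP$. With this definition the last assertion of the theorem is a tautology: for $s\colon\un\to u^n$ one has $\cP\in(\rho^\bullet_\cK)^{-1}(D(s))\iff s\notin\rho^\bullet_\cK(\cP)\iff\Cone(s)\in\cP\iff\cP\in U(\Cone(s))$, and since the principal opens $D(s)$ form a basis of $\Spec^h$ while each $U(\Cone(s))$ is open in $\Spc(\cK)$, this also gives continuity of $\rho^\bullet_\cK$; the ungraded statement is identical, with $f$, $D(f)$, $U(\Cone(f))$.

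Next I would check that $\rho_\cK(\cP)$ is genuinely a prime ideal (and $\rho^\bullet_\cK(\cP)$ a homogeneous prime). Primality and the ``absorbs products'' half of the ideal axiom come from the octahedral axiom: factoring $\un\xrightarrow{fg}\un$ as $\un\xrightarrow{g}\un\xrightarrow{f}\un$ produces a distinguished triangle $\Cone(g)\to\Cone(fg)\to\Cone(f)\to\Cone(g)[1]$, so $\Cone(fg)$ lies in the thick subcategory generated by $\Cone(f)$ and $\Cone(g)$; as $\cP$ is thick, $\Cone(fg)\notin\cP$ forces $\Cone(f)\notin\cP$ or $\Cone(g)\notin\cP$. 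Closure under addition is the delicate point, which I would handle through the Verdier localisation $q\colon\cK\to\cK/\cP$: since $\cP$ is prime, $\cK/\cP$ is \emph{local} (i.e.\ $a\otimes b\cong0$ implies $a\cong0$ or $b\cong0$), and $\Cone_\cK(f)\in\cP$ iff $q(f)$ is invertible, so $\rho_\cK(\cP)$ is the preimage of the non-units along $\hom_\cK(\un,\un)\to\hom_{\cK/\cP}(\un,\un)$. Thus it suffices to show that the endomorphism ring of the unit of a local $\otimes$-triangulated category is a local ring, and for this I would prove the identity $\Cone(f)\otimes\Cone(1-f)=0$, valid in \emph{any} $\otimes$-triangulated category: a short chase of the triangle $\un\xrightarrow{f}\un\to\Cone(f)\to\un[1]$ shows that $f^2$ acts as $0$ on $\Cone(f)$ (and $(1-f)^2$ as $0$ on $\Cone(1-f)$), hence both act as $0$ on the tensor product, which therefore vanishes because $1=(3-2f)f^2+(1+2f)(1-f)^2$ exhibits $1\in(f^2,(1-f)^2)\subseteq\ZZ[f]$; locality then forces $f$ or $1-f$ to be invertible, the defining property of a commutative local ring. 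The graded assertion follows similarly, reducing a putative decomposition of a homogeneous unit to this degree-zero case.

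The substantial point is surjectivity. Given a prime $\p\subseteq R:=\hom_\cK(\un,\un)$, I want $\cP\in\Spc(\cK)$ with $\rho_\cK(\cP)=\p$, equivalently with $\Cone(f)\in\cP\iff f\notin\p$. The plan is to ``localise $\cK$ at $\p$'': set $\cJ^\p:=\langle\Cone(g):g\notin\p\rangle$, a thick $\otimes$-ideal, and pass to $\cK/\cJ^\p$. Any prime of $\cK$ containing $\cJ^\p$ automatically maps \emph{into} $\p$ under $\rho_\cK$, so, using that $\Spc(\cK/\cJ^\p)$ is quasi-compact and that $\supp(a)\cap\supp(b)=\supp(a\otimes b)$, finding one that maps \emph{onto} $\p$ reduces to showing that no finite tensor product $P:=\Cone(f_1)\otimes\dots\otimes\Cone(f_n)$ with all $f_i\in\p$ has empty support in $\cK/\cJ^\p$ (the case $n=0$ being $\un\notin\cJ^\p$); as empty support is equivalent to $\otimes$-nilpotence, it is enough that $P^{\otimes k}\notin\cJ^\p$ for every $k$. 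Here both hypotheses enter. On one hand the homological, shift- and retract-stable invariant $b\mapsto\bigoplus_j\hom_\cK(\un,b[j])\otimes_R R_\p$ annihilates $\cJ^\p$: from the triangle $c\xrightarrow{g}c\to\Cone(g)\otimes c\to c[1]$ one reads off that it kills each generator $\Cone(g)\otimes c$ with $g\notin\p$, inverting $g$ making multiplication by $g$ an isomorphism. On the other hand the connectivity hypothesis $\hom_\cK(\un[i],\un)=0$ for $i<0$ makes $\hom_\cK(\un,-)$ behave like a truncation, so an easy induction along the defining triangles gives $\hom_\cK(\un,P^{\otimes k})\cong R/(f_1,\dots,f_n)$, whose localisation $R_\p/(f_1,\dots,f_n)$ is nonzero because $(f_1,\dots,f_n)\subseteq\p R_\p$ is proper; hence $P^{\otimes k}\notin\cJ^\p$. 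A standard compactness argument then places $\cP$ in $\bigcap_{f\in\p}\supp_{\cK/\cJ^\p}(\Cone(f))$, which gives $\rho_\cK(\cP)=\p$.

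For $\rho^\bullet_\cK$ with $u=\un[1]$ I would run the same argument homogeneously, with a homogeneous prime $\p^\bullet$ of $R^\bullet=\bigoplus_n\hom_\cK(\un,\un[n])$; there coherence of $R^\bullet$ replaces connectivity, keeping the relevant finitely generated homogeneous ideals and annihilators finitely presented so that the homogeneous-localisation bookkeeping terminates --- this is \cite[Thm.~7.13]{BalSSS10}. The footnoted remarks ($\Spec^h$ versus $\Proj$, and the topology on $\Spec^h$) are purely formal. The one step I expect to require real work is the nonvanishing $P^{\otimes k}\notin\cJ^\p$ above, i.e.\ controlling which objects the thick $\otimes$-ideal $\cJ^\p$ actually contains; everything else is formal manipulation of triangles and supports.
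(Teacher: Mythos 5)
This theorem is quoted verbatim from Balmer's \cite{BalSSS10} and the paper offers no proof of its own beyond recording the definition of $\rho_\cK$ and $\rho^\bullet_\cK$ in Remark~\ref{rema:structureMapDef}; your definitions agree with that remark, and your reconstruction follows the cited source's actual line of argument (the tautological computation of $(\rho^\bullet_\cK)^{-1}(D(s))=U(\Cone(s))$ giving continuity, the identity $\Cone(f)\otimes\Cone(1-f)=0$ via $f\cdot\id_{\Cone(f)}$ being square-zero to show the endomorphism ring of the unit of a local category is local, and the quasi-compactness/support argument with the homological functor $\bigoplus_j\hom(\un,-[j])_\p$ detecting $P^{\otimes k}\notin\cJ^\p$ for surjectivity). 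The only step you leave at the level of a gesture is the coherent graded case of surjectivity of $\rho^\bullet_\cK$, which is precisely \cite[Thm{.}7.13]{BalSSS10}, so nothing is missing relative to what the paper itself establishes.
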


\begin{rema} \label{rema:structureMapDef}
The maps of Theorem~\ref{theo:structureMap} are as follows. The first one is defined on primes $\cP$ by $\rho_\cK(\cP) = \{ \un \stackrel{f}{\to} \un : Cone(f) \not\in\cP \}$. The second one takes a prime $\cP$ to the homogeneous ideal generated by those $\un \stackrel{f}{\to} u^n$ such that $Cone(f) \not\in\cP$ as $n$ ranges over all integers.
\end{rema}

Later on we will use the following facts.

\begin{prop}[Balmer] \label{prop:firstProp}
Let $\cK$ be an essentially small $\otimes$-triangulated category.
\begin{enumerate}
 \item If $F: \cK \to \cL$ is a $\otimes$-exact functor to another essentially small $\otimes$-triangulated category $\cL$, then the assignment $\cP \mapsto F^{-1}\cP$ defines a continuous map of topological spaces $\Spc(F): \Spc(\cL) \to \Spc(\cK)$. For every $a \in \cK$, we have $\supp(F(a)) = \Spc(F)^{-1}(\supp(a))$, \cite[Prop.11(c)]{BalTTG10}.

 \item \label{prop:firstProp:open} Let $\cJ \subset \cK$ be a thick $\otimes$-ideal. Then the Verdier localisation $\cK \to \cK / \cJ$ induces a homeomorphism from $\Spc(\cK / \cJ)$ onto the subspace $\{ \cP : \cJ \subseteq \cP \}$ of $\Spc(\cK)$. This subspace is not always \emph{open} (although it is if, for example, $\cJ$ is generated by a single element), but it is always \emph{closed under specialisation}, \cite[Thm.18(a)]{BalTTG10}.

 \item \label{prop:firstProp:closed} Let $u \in \cK$ be an object such that the cyclic permutation $u^{\otimes 3} \stackrel{\sim}{\to} u^{\otimes 3}$ is the identity. Then the localisation $\cK \to \cK[u^{\otimes -1}]$ induces a homeomorphism from $\Spc(\cK[u^{\otimes -1}])$ onto the \emph{closed} subspace $\supp(u) = \{ \cP : u \notin \cP \}$ of $\Spc(\cK)$, \cite[Thm.18(c)]{BalTTG10}.

 \item \label{prop:firstProp:nil} Let $f$ be a morphism between tensor invertible objects of $\cK$. Then there exists a prime containing $Cone(f)$ if and only if $f^{\otimes n} \neq 0$ for all $n \geq 0$, 
\cite[Thm.2.15]{BalSSS10}, %\langle Cone(f) \rangle = \{ a : f^{\otimes n} \otimes a = 0 for some n \}
 \cite[Cor.2.5]{Bal05}.%a belongs to a prime iff \un \neq \langle a \rangle.
\end{enumerate}
\end{prop}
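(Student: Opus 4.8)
The plan is to obtain all four parts from Balmer's foundational work, using the first part as the workhorse for the other three. For the first part, I would check directly that, for a prime $\cQ$ of $\cL$, the preimage $F^{-1}\cQ$ is a prime of $\cK$: it is a triangulated subcategory since $F$ is exact, thick since $F$ is additive and preserves direct summands, and a $\otimes$-ideal since $F$ is monoidal, so $F(a \otimes b) \cong F(a) \otimes F(b) \in \cQ$ whenever $b \in F^{-1}\cQ$; the conditions $\un \notin F^{-1}\cQ$ and ($a \otimes b \in F^{-1}\cQ \Rightarrow a \in F^{-1}\cQ$ or $b \in F^{-1}\cQ$) follow from $F(\un) \cong \un$ together with the same tensor isomorphism. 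Continuity of $\Spc(F)$ and the identity $\supp(F(a)) = \Spc(F)^{-1}(\supp(a))$ then both reduce to the computation $\Spc(F)^{-1}(U(a)) = U(F(a))$ on basic opens.

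For the second part, I would apply the first to the $\otimes$-exact Verdier quotient $q \colon \cK \to \cK/\cJ$. The assignments $\cQ \mapsto q^{-1}\cQ$ and $\cP \mapsto \cP/\cJ$ should be mutually inverse bijections between $\Spc(\cK/\cJ)$ and $\{\cP \in \Spc(\cK) : \cJ \subseteq \cP\}$, using that $\cK/\cJ$ has the same objects as $\cK$ and that $\cJ$ maps to $0$; that $\Spc(q)$ is then a homeomorphism onto this subspace is checked on basic opens. Writing the subspace as the complement of $\bigcup_{a \in \cJ}\supp(a)$ should yield its stated topological behaviour --- not open in general, but open, equal to some $U(a)$, when $\cJ$ is generated by one object. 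For the third part I would likewise invoke the first part for the $\otimes$-exact localisation $\cK \to \cK[u^{\otimes -1}]$, which is a $\otimes$-triangulated category by the hypothesis on the cyclic permutation of $u^{\otimes 3}$; after realising $\cK[u^{\otimes -1}]$ by Balmer's construction (a filtered colimit along $- \otimes u$, or an appropriate quotient), one identifies the image of $\Spc(\cK[u^{\otimes -1}])$ with the primes of $\cK$ not containing $u$, i.e.\ with $\supp(u) = \Spc(\cK) \setminus U(u)$, which is closed.

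The fourth part is where genuine input is needed. After tensoring $f$ with the inverse of its source I may assume $f \colon \un \to u$ with $u$ invertible, sitting in a triangle $\un \xrightarrow{f} u \to \Cone(f) \to \un[1]$. A prime contains $\Cone(f)$ exactly when $U(\Cone(f)) \neq \emptyset$, and the second part gives $\Spc(\cK/\langle \Cone(f)\rangle) \cong U(\Cone(f))$; since a nonzero essentially small $\otimes$-triangulated category has nonempty spectrum --- by Zorn's lemma one finds a maximal proper thick $\otimes$-ideal, which Balmer shows is automatically prime --- this reduces the claim to: $\un \notin \langle \Cone(f)\rangle$ if and only if $f^{\otimes n} \neq 0$ for all $n \geq 0$. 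For this I would quote the cited nilpotence results of Balmer, whose content is that $\un \in \langle \Cone(f)\rangle$ precisely when $\Cone(f)$ is $\otimes$-nilpotent, precisely when $f^{\otimes n} = 0$ for some $n$; the proof passes through a finite filtration of $\Cone(f)^{\otimes n}$ whose subquotients are built from the cones $\Cone(f^{\otimes k})$, $0 \le k \le n$, together with the easy implication that $f^{\otimes n} = 0$ forces $\un$ to be a summand of $\Cone(f^{\otimes n}) \in \langle \Cone(f)\rangle$.

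I expect the main obstacle to be exactly this last nilpotence statement. Parts one through three are essentially bookkeeping once one is fluent with the definitions, whereas the equivalence in the fourth part combines the non-triviality of the spectrum of a nonzero category with the delicate filtration argument relating tensor powers of $\Cone(f)$ to the cones of powers of $f$ --- this is the part I would lean on Balmer's papers for rather than reconstruct.
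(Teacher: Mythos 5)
The paper offers no proof of this proposition --- it is stated as a quotation of Balmer's results, with the citations to \cite{BalTTG10}, \cite{BalSSS10}, and \cite{Bal05} serving as the entire justification. Your reconstruction is correct and follows Balmer's own arguments (parts (1)--(3) by direct verification from the definitions, part (4) reduced to the nonemptiness of the spectrum of a nonzero category plus Balmer's $\otimes$-nilpotence theorem, which you rightly cite rather than reprove), so it is consistent with, and strictly more detailed than, what the paper does.
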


The following notion of prime ideal of an abelian category which is analogous to primes of a $\otimes$-triangulated category was developed in \cite{Pet13}.

\begin{defi}[{cf. \cite[Def{.}4.2]{Pet13}}]
Let $\cA$ be an abelian category. Recall that a full\footnote{This assumption does not appear in \cite[Def{.}4.2]{Pet13}, but it seems it should be there.} subcategory of $\cA$ is called \emph{thick}, \cite[Def{.}8.3.21.(iv)]{KS05}, if it is closed under extensions, kernels, and cokernels.%
\footnote{In \cite[Def{.}4.2]{Pet13}, the definition uses \emph{coherent} abelian subcategories, which, as Oliver Braunling pointed out to me, are just thick subcategories containing a zero object.} %
Suppose now that $\cA$ is a tensor abelian category. A \emph{(thick) tensor ideal} of $\cA$ is a (full) thick subcategory $\cM \subseteq \cA$ such that $\cM \otimes \cA \subseteq \cM$. A proper $\otimes$-ideal $\cM \subset \cA$ is called a \emph{(thick) prime ideal} if $a \otimes b \in \cM$ implies $A \in \cM$ or $B \in \cM$.
\end{defi}

\begin{lemm} \label{lemm:rigssfield}
Suppose that $(\cA, \otimes, \un)$ is a rigid$^\textrm{\ref{footnote:rigid}}$ %
semisimple%
\footnote{An object $a$ in an abelian category is called \emph{simple} if every monomorphism $b \to a$ (in the categorical sense) is either zero or an isomorphism. An object is \emph{semisimple} if it is a sum of simple objects. An abelian category is \emph{semisimple} if all of its objects are semisimple. } %
tensor abelian category. %
%Then the set of primes of both $K^b(\cA)$ and $\cA$ are in bijection with the direct summands of $\un$. In particular, 
If $\un$ is simple, then both $K^b(\cA)$ and $\cA$ possess a unique prime: $\{0\}$.
\end{lemm}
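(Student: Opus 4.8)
The plan is to show that a rigid semisimple tensor abelian category $\cA$ with $\un$ simple is, in a suitable sense, "as simple as a field", so that the only proper thick tensor ideal is $\{0\}$; the statement about $K^b(\cA)$ then follows from the abelian case via a standard argument. First I would establish the basic structural fact that \emph{every} nonzero object $a \in \cA$ is "invertible up to a summand'': since $\cA$ is semisimple, $a$ decomposes as a finite sum of simple objects, so it suffices to understand simple objects. For a simple object $s$, rigidity gives a dual $Ds$ together with evaluation and coevaluation maps; since $\un$ is simple, the composite $\un \to s \otimes Ds \to \un$ (the "dimension'') is either $0$ or an isomorphism, and one shows it cannot be $0$ — otherwise the unit object would be a retract of something that dies, contradicting $\un \notin \{0\}$ being simple. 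More precisely, in a rigid category the coevaluation $\un \to s \otimes Ds$ is a split monomorphism onto a direct summand (because $\un$ is simple and the composite with evaluation is nonzero), but I would rather argue: $\Hom(\un, s \otimes Ds) = \Hom(s, s) = k$ is nonzero (with $k = \End(\un)$, a division ring since $\un$ is simple), so $s \otimes Ds$ contains $\un$ as a summand by semisimplicity, and a standard rigidity computation forces $s \otimes Ds \cong \un$, i.e. every simple object is $\otimes$-invertible.

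Granting that every simple object is invertible, any nonzero $a$ has a simple summand $s$, hence an invertible summand. Now suppose $\cM$ is a proper thick tensor ideal and $a \in \cM$ is nonzero. Write $a \cong s \oplus a'$ with $s$ simple invertible; thickness (closure under direct summands) gives $s \in \cM$, and then $\cM \ni s^{-1} \otimes s = \un$ (using the tensor ideal property $\cA \otimes \cM \subseteq \cM$), contradicting properness. Hence $\cM = \{0\}$, and since $\{0\}$ is trivially a prime ideal (the defining implication $a \otimes b \in \{0\} \Rightarrow a \in \{0\}$ or $b \in \{0\}$ is just the statement that a tensor product of two nonzero objects is nonzero, which again follows from invertibility of a simple summand of each factor), $\{0\}$ is the unique prime of $\cA$.

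For $K^b(\cA)$: since $\cA$ is semisimple abelian, every bounded complex is quasi-isomorphic (indeed isomorphic in $K^b(\cA)$, as all short exact sequences split) to its homology, a finite sum of shifts of objects of $\cA$; thus every object of $K^b(\cA)$ is a finite direct sum of shifts $a[n]$ with $a \in \cA$. If $\cP \subseteq K^b(\cA)$ is a prime thick $\otimes$-ideal, then a nonzero object contains some $a[n]$ with $a \neq 0$ as a summand, hence (thickness, and invertibility of $[n]$) contains some nonzero $a \in \cA$, hence a simple invertible $s$, hence $\un = \un_{K^b(\cA)}$, contradicting $\un \notin \cP$; so again the only candidate is $\{0\}$, which is prime because $K^b(\cA)$ of a semisimple category has the property that $a \otimes b \simeq 0$ forces $a \simeq 0$ or $b \simeq 0$ (reduce to homology and use the abelian statement degreewise).

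The main obstacle is the step asserting $s \otimes Ds \cong \un$ for $s$ simple — i.e. that simplicity of $\un$ plus rigidity forces every simple object to be $\otimes$-invertible. The inequality $\Hom(\un, s\otimes Ds) \neq 0$ is easy, and semisimplicity then exhibits $\un$ as a summand of $s \otimes Ds$; the work is in ruling out extra summands, for which I would use the rigidity adjunction $\Hom(s \otimes Ds, \un) \cong \Hom(s, s) \cong k$ together with the snake-like identities relating evaluation and coevaluation to conclude that the chosen embedding $\un \hookrightarrow s \otimes Ds$ is in fact an isomorphism (equivalently, that $\dim(s)$, viewed in $k$, is a unit and the "rank one'' object $s$ has an inverse, namely $Ds$). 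I would expect this to be a short but slightly delicate diagram chase with the duality data, and it is the only place where \emph{both} hypotheses (rigid \emph{and} $\un$ simple) are genuinely used together.
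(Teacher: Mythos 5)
The step you single out as the main obstacle --- that rigidity plus simplicity of $\un$ forces $s \otimes Ds \cong \un$ for every simple $s$ --- is not merely delicate: it is false, and no diagram chase will establish it. A rigid semisimple tensor abelian category with simple unit can perfectly well contain simple objects that are not $\otimes$-invertible. Take $\cA = \mathrm{Rep}_\QQ(S_3)$ with $s$ the $2$-dimensional irreducible representation, so that $s \otimes Ds$ is $4$-dimensional and contains $\un$ only as a proper direct summand; or, more to the point for this paper, take $\cA = \cM_{\num}(\FF_q)_\QQ$ --- the very category the lemma is applied to --- and $s$ a simple summand of the motive of a curve of positive genus. So the architecture ``every nonzero object has an invertible summand, hence the ideal contains $s^{-1}\otimes s = \un$'' cannot be carried out as stated, and the same applies to your invocation of invertibility when checking that $\{0\}$ is prime.

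Fortunately, the part of your argument that is correct already suffices, and it is essentially the paper's proof. You correctly observe that $\hom(\un, s \otimes Ds)$ is nonzero (being identified with $\hom(s,s)$ by the duality adjunction), that a nonzero map out of the simple object $\un$ is a split monomorphism by semisimplicity, and hence that $\un$ is a \emph{direct summand} of $s \otimes Ds$. That is all you need: if $\cM$ is a thick $\otimes$-ideal containing a nonzero $a$, thickness gives a simple summand $s \in \cM$, the ideal property gives $s \otimes Ds \in \cM$, and thickness again gives $\un \in \cM$, so $\cM = \cA$. (The paper runs this argument with $a$ itself rather than a simple summand, using the triangle identity --- the composite $a \to a \otimes Da \otimes a \to a$ equals $\id_a$ --- to see that the coevaluation $\un \to Da \otimes a$ is nonzero.) The same repair handles primality of $\{0\}$: if $a \otimes b = 0$ with $a \neq 0$, pick a simple summand $s$ of $a$; then $s \otimes b = 0$, hence $Ds \otimes s \otimes b = 0$, and $b \cong \un \otimes b$ is a direct summand of it, so $b = 0$. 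Your reduction of $K^b(\cA)$ to $\cA$ via homology is the same as the paper's and is fine. One further caution: your first attempt at nonvanishing via the ``dimension'' $\un \to s \otimes Ds \to \un$ is not the composite controlled by the triangle identities and needs a separate argument; the $\hom(\un, s \otimes Ds) \cong \hom(s,s)$ route (or the triangle identity itself) is the one to use.
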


\begin{proof}
Since $\cA$ is semisimple, every object of $K^b(\cA)$ is isomorphic to the sum of its shifted cohomology objects, i.e., $K^b(\cA) \cong \bigoplus_{i \in \ZZ} \cA$ as a triangulated category; the triangulated structure on $\bigoplus_{i \in \ZZ} \cA$ is given by shifting the indices, and $\Cone(a \stackrel{f}{\to} b) = \ker(f)[1] \oplus \coker(f)$. Inspecting the definitions, we see that $\cM \mapsto \bigoplus_{i \in I} \cM$ is a bijection from the set of prime ideals of the tensor abelian category $\cA$ to the set of primes of the $\otimes$-triangulated category $\bigoplus_{i \in \ZZ} \cA$. So it suffices to treat the case of $\cA$.

%First suppose that $\un$ is simple. %
Let $a$ be a nonzero object of some prime ideal $\cM$. Since it is a tensor ideal, it must also contain $(Da) \otimes a$ where $Da$ is the strong dual of $a$, which exists by the assumption that $\cA$ is rigid. By definition, $(Da) \otimes-$ is right adjoint to $a \otimes- $, and so we have canonical morphisms $\un \stackrel{\epsilon}{\to} (Da) \otimes a$ and $a \otimes (Da) \stackrel{\eta}{\to} \un$ such that the composition $a \stackrel{\id_a \otimes \epsilon}{\longrightarrow} a \otimes (Da) \otimes a \stackrel{\eta \otimes \id_a}{\longrightarrow} a$ is $\id_a$. If $\epsilon$ were to be zero, then $\id_a$ would be zero, contradicting the assumption that $a$ is nonzero. So $\epsilon$ is nonzero, and by the assumption that $\un$ is simple, and $\cA$ is semisimple, $\epsilon$ must be the inclusion of a direct summand. Since prime ideals are closed under direct summand, it follows that $\cM$ contains $\un$, and therefore $\cM = \cA$.
\end{proof}

\section{Motivic categories}

In this section we rapidly review the motivic categories that we will discuss. Specifically, the Morel-Voevodsky stable homotopy category $SH(S)$, Voevodsky's triangulated category of motives $DM(S)$, Grothendieck's classical categories of motives with respect to an adequate equivalence relation $\cM_\sim(k)$, and some of the relationships between these. This section will be too basic for the experts, and too terse for the non-experts, but we hope that it will at least serve to set notation for the experts, and provide references to the literature for the non-experts.

\begin{rema}
Philosophically, categories of motives should be defined by universal properties. Consequently, all the constructions have a ``generators and relations'' feel to them, cf. Remark~\ref{rema:ClassicalRelations}, Remark~\ref{rema:voevDMgenrel}, and Definition~\ref{defi:SHinfinity}.
\end{rema}

\subsection{Grothendieck motives}

A nice introduction to classical Grothendieck motives over a field is \cite{Sch94}. For the extension to a smooth base and a beautiful application of this extension see \cite{DM91}.

\begin{defi}
\emph{Cycle groups}. Let $k$ be a field, and $S$ a smooth $k$-scheme of pure dimension $d_S$. For a smooth projective $S$-scheme $X$, let $\cZ^i(X)$ denote the free abelian group generated by the closed integral subscheme of $X$ of codimension $i$. If $\sim$ is an adequate equivalence relation%
\footnote{\label{footnote:adEqRel}An \emph{adequate equivalence relation} is a family of equivalence relations $\sim_X$ on the $\cZ^\ast(X)$ which satisfy three properties, which essentially require that composition as defined above is well-defined, \cite{Sam60}. In short, pullback, pushforward, and intersection are well-defined.} %
such as rational equivalence,%
\footnote{Two cycles $\alpha, \alpha' \in \cZ^i(X)$ are rationally equivalent if there is a cycle $\beta \in \cZ^i(\PP^1_X)$ such that $\beta \cdot [\{0\} \times X] = \alpha$ and $\beta \cdot [\{\infty\} \times X] = \alpha'$. Rational equivalence is the coarsest equivalence relation.} %
homological equivalence,%
\footnote{A cycle $\alpha$ is homologically equivalent to zero if its image under the cycle class map $\cZ^i(X) \to H^{2i}(X)$ is zero, for some prechosen Weil cohomology theory, such as étale cohomology $H^i(X) = H_{et}^{2i}(X, \QQ_l(i))$ for some $l \nmid \mathsf{char}\ p$. In other words, $A_{\hom}^i(X)$ is the image of the cycle class map $A_{\hom}^i(X) = \im(\cZ^i(X) \to H^{2i}(X))$.} %
or numerical equivalence,%
\footnote{Numerical equivalence is the coarsest equivalence relation which makes the intersection product $A_{\num}^i(X) \otimes A_{\num}^{d - i}(X) \to A_{\num}^d(X)$ nondegenerate, where $d = \dim X$. That is, $\alpha \in A^i_{\rat}(X)$ is numerically equivalent to zero if and only if $\alpha \cdot \beta = 0$ for all $\beta \in A^{d - i}_{\rat}(X)$.} %
denoted $\rat, \hom$, and $\num$ respectively, we will write $A_\sim^i(X) = \cZ^i(X) / \sim$.

\emph{Cycle categories.} For any triple $X, Y, Z$ of smooth projective $S$-schemes, and cycles $\alpha \in A_\sim^i(X\times_S Y)$, and $\beta \in A_\sim^j(Y\times_SZ)$, the composition $\beta \circ \alpha \in A_\sim^{i + j}(X\times_S Z)$ is defined by pulling $\alpha$ and $\beta$ back to the triple product $X \times_S Y \times_S Z$ along the canonical projections, intersecting them, and then obtaining a cycle on $X \times_S Z$ by pushing forward along the canonical projections. %\footnotemark[\ref{footnote:adEqRel}] 
In this way we obtain a category, whose objects are smooth projective $S$-schemes, and $\hom(X, Y) = A_\sim^{\dim X/S}(X\times_SY)$. Here $\dim X / S$ denotes the relative dimension of the morphism $X \to S$. Identity morphisms are given by the cycles associated to the diagonals $\Delta_X \subseteq X \times_S X$. Fibre product of $S$-schemes induces a tensor product on this category.

\emph{Motivic categories.} The objects of $\cM_\sim(S)$ are triples $(X, p, n)$ where $X$ is a smooth projective $S$-scheme, $p \in A_\sim^{\dim X/S}(X \times_S X)$ satisfies $p \circ p = p$, and $n \in \ZZ$. We set
\[ \hom_{\cM_\sim(S)}((X, p, n), (Y, q, m)) = \{ \alpha \in A_\sim^{\dim X/S + n - m}(X \times_S Y) : \alpha = p \circ \alpha \circ q\}. \]
This category is a tensor additive category with sum induced by disjoint union of $S$-schemes, and tensor product induced by fibre product. There is a canonical functor 
\[ M: \SmProj(S) \to \cM_\sim(S); \qquad X \mapsto (X, [\Delta_X], 0) \] 
from smooth projective $S$-schemes which sends a morphism $f: X \to Y$ to the cycle associated to its graph $\Gamma_f \subseteq X \times_S Y$.
\end{defi}

\begin{rema}
Any section $s: S \to X$ of an $S$-scheme $f:X \to S$, for example the section at infinity $\infty: S \to \PP^1_S$, gives rise to any idempotent endomorphism $s \circ f$, and consequently a decomposition $M(X) \cong M(S) \oplus (\ker M(s \circ f))$. The Leftschetz motive $\LL$ is defined as the kernel of the projection to infinity $M(\PP^1_S) \cong M(S) \oplus \LL$. We then have a canonical isomorphism 
\[ (X, p, n) \cong \biggr (\im \bigr (M(X) {\stackrel{p}{\to}} M(X) \bigl ) \biggl )\otimes \LL^{\otimes (- n)}. \]
\end{rema}

\begin{rema} \label{rema:ClassicalRelations}
This definition can be seen as a generators and relations construction of a category. The generators are smooth projective varieties and correspondences, and the relations we have forced are the equivalence relation $\sim$, the existence of kernels and images of idempotent endomorphisms, and the tensor inverse of $\LL$.
\end{rema}

\begin{defi} \label{defi:tensorLambda}
For an abelian group $A$ and $\Lambda$ a flat $\ZZ$-algebra, we write $A_\Lambda = A \otimes \Lambda$. If $\cA$ is an additive category, we write $\cA_\Lambda$ for the category which has the same objects as $\cA$ and $\hom_{\cA_\Lambda}(a, b) = \hom_\cA(a, b)_\Lambda$.
\end{defi}

\begin{rema}
Since $\QQ$ is a flat $\ZZ$-algebra, the categories $\cM_\sim(k)_\QQ$ are the same as the ones constructed as above, using $A_\sim^\ast(X)_\QQ$ instead of $A_\sim^\ast(X)$.
\end{rema}

\begin{theo}[{\cite[Thm{.}1]{Jan92}}] \label{theo:jannsen} %Milne cites this for motives over a finite field.
Let $k$ be a field and $\sim$ an adequate equivalence relation. The category $\cM_{\sim}(k)_\QQ$ is semisimple if and only if $\sim$ is numerical equivalence.
\end{theo}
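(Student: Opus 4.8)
The plan is to treat the two implications separately. The substantial direction is that $\sim=\num$ implies $\cM_\sim(k)_\QQ$ is semisimple — this is Jannsen's theorem — and I would organise its proof in three steps. Write $F_X=A^{\dim X}_\num(X\times X)_\QQ$, the endomorphism ring of $M(X)$ in $\cM_\num(k)_\QQ$. First I would reduce to a statement about the rings $F_X$: the category $\cM_\num(k)_\QQ$ is $\QQ$-linear, additive, and by construction idempotent complete (its objects are the split images of idempotent correspondences), and a standard lemma says that such a category is abelian and semisimple once (a) all Hom-groups are finite-dimensional $\QQ$-vector spaces and (b) each $F_X$ is a semisimple $\QQ$-algebra — for then a decomposition $\id_{M(X)}=e_1+\dots+e_r$ of the unit of $F_X$ into primitive orthogonal idempotents splits and exhibits $M(X)=\bigoplus e_iM(X)$ as a finite sum of objects whose endomorphism rings are division algebras, hence simple. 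Property (a), the finite-dimensionality of $A^*_\num(Y)_\QQ$, is the one genuinely external input besides the definition of $\num$; it follows from the existence of a Weil cohomology theory (e.g.\ $\ell$-adic), since $A^*_\num(Y)_\QQ$ is a quotient of the group of cycles modulo homological equivalence, which injects into $H^{2*}(Y)$.

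The next step is to analyse $F_X$ as an algebra. I would equip it with the symmetric $\QQ$-bilinear form $\langle f,g\rangle=\tr(f\circ g)=\deg\bigl((f\circ g)\cdot\Delta_X\bigr)=\deg_{X\times X}(f\cdot{}^tg)$, where $\tr$ is the categorical trace of the rigid tensor category $\cM_\num(k)_\QQ$ and ${}^tg$ is the transpose correspondence; it satisfies $\langle fg,h\rangle=\langle f,gh\rangle$. Since $g\mapsto{}^tg$ is a linear automorphism of $F_X=A^{\dim X}_\num(X\times X)_\QQ$, non-degeneracy of $\langle\,,\,\rangle$ is exactly non-degeneracy of the intersection pairing on codimension-$\dim X$ cycles of the variety $X\times X$ (which has dimension $2\dim X$) — and that holds by the very definition of numerical equivalence. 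Now $\mathrm{rad}(F_X)$ is a nilpotent two-sided ideal, since $F_X$ is finite-dimensional over $\QQ$; hence for $f\in\mathrm{rad}(F_X)$ and any $g\in F_X$ the composite $f\circ g$ is nilpotent, so \emph{if} nilpotent self-correspondences have vanishing trace, then $\langle f,g\rangle=0$ for every $g$, whence $f=0$ by non-degeneracy, $\mathrm{rad}(F_X)=0$, and $F_X$ is semisimple.

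The heart of the matter — and the step I expect to be the only real obstacle — is therefore: a nilpotent $\eta\in F_X$, say with $\eta^{\circ n}=0$, has $\tr(\eta)=0$. Here one genuinely has to pass through a cohomological realisation; a bare non-degenerate ``Frobenius'' trace form does not force semisimplicity, as $\QQ[t]/(t^2)$ shows. Choose a lift $\tilde\eta\in A^{\dim X}_{\hom}(X\times X)_\QQ$; it has a well-defined class acting on $H^\bullet(X,\QQ_\ell)$, and the class of $\eta^{\circ k}$ is $\tilde\eta^{\,k}$. The Lefschetz trace formula for algebraic correspondences gives $\tr(\eta^{\circ k})=\deg(\eta^{\circ k}\cdot\Delta_X)=\sum_i(-1)^i\,\Tr\bigl(\tilde\eta^{\,k}\mid H^i(X,\QQ_\ell)\bigr)$ for every $k\geq1$ — well defined on $\cM_\num$ because a numerically trivial cycle pairs to zero with $\Delta_X$ — and the left-hand side vanishes for $k\geq n$. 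Put $R(t)=\prod_i\det\bigl(1-t\,\tilde\eta\mid H^i\bigr)^{(-1)^i}$, so that $\sum_{k\geq1}\tr(\eta^{\circ k})\,t^k=-t\,\tfrac{d}{dt}\log R(t)$; the vanishing of the coefficients in degrees $\geq n$ makes this logarithmic derivative a polynomial, which forces the zeros and poles of the rational function $R$ to cancel — for each nonzero scalar $\lambda$ the multiplicity of $\lambda^{-1}$ as a root of $\prod_{i\text{ even}}\det(1-t\tilde\eta\mid H^i)$ equals that for $\prod_{i\text{ odd}}$ — so $R\equiv1$ and \emph{every} coefficient $\tr(\eta^{\circ k})$, in particular $\tr(\eta)$, vanishes. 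This establishes (b), and hence the implication.

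For the converse, suppose $\cM_\sim(k)_\QQ$ is semisimple. Since numerical equivalence is the coarsest adequate equivalence relation, the identity on smooth projective varieties extends to a full $\otimes$-linear functor $\pi\colon\cM_\sim(k)_\QQ\to\cM_\num(k)_\QQ$, and it suffices to prove $\pi$ faithful, i.e.\ that $A^i_\sim(Y)_\QQ\to A^i_\num(Y)_\QQ$ is injective for every connected smooth projective $Y$ and every $i$. Let $Z\in A^i_\sim(Y)_\QQ$ be numerically trivial, regarded as a morphism $Z\colon\LL^{-i}\to M(Y)$ in $\cM_\sim(k)_\QQ$. Because $\hom_{\cM_\sim(k)_\QQ}(\un,\un)=A^0_\sim(\Spec k)_\QQ=\QQ$ is a division ring, $\un$ — and hence the $\otimes$-invertible object $\LL^{-i}$ — is simple, and in a semisimple abelian category a nonzero morphism out of a simple object is a split monomorphism. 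So if $Z\neq0$ there is a retraction $r\colon M(Y)\to\LL^{-i}$ with $r\circ Z=\id$; via Poincaré duality $M(Y)^\vee\cong M(Y)\otimes\LL^{-\dim Y}$, this $r$ corresponds to a cycle $W\in A^{\dim Y-i}_\sim(Y)_\QQ$ with $\deg(Z\cdot W)=1$, contradicting the numerical triviality of $Z$. Hence $Z=0$, $\pi$ is faithful, and $\sim=\num$.
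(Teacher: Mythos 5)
The paper does not prove this statement --- it is quoted directly from \cite[Thm.~1]{Jan92} --- so the only meaningful comparison is with Jannsen's original argument. Your proof of the substantial direction ($\num\Rightarrow$ semisimple) is correct and is essentially Jannsen's: the reduction to semisimplicity of the finite-dimensional algebras $F_X$, the trace pairing $\langle f,g\rangle=\tr(f\circ g)$ whose non-degeneracy is exactly the definition of numerical equivalence, the nilpotency of the radical, and the key cohomological lemma that a correspondence whose composition powers are eventually numerically trivial has vanishing trace (proved, as in Jannsen, by lifting to homological equivalence, applying the Lefschetz trace formula for a Weil cohomology, and using that power sums of the nonzero eigenvalues on even and odd cohomology agreeing for all large $k$ forces them to agree for all $k\ge 1$). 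Your converse is also correct but takes a different, more categorical route: Jannsen produces from a nonzero numerically trivial cycle $\alpha$ an explicit correspondence $\gamma=\alpha\times\beta$ with $\gamma\circ h\circ\gamma=0$ for all $h$, hence a nonzero square-zero two-sided ideal contradicting semisimplicity of $F_X$; you instead use that $\un$ and hence each $\LL^{-i}$ is simple, so a nonzero numerically trivial $Z\colon\LL^{-i}\to M(Y)$ would split and yield a cycle of complementary codimension pairing to $1$ with $Z$. Your version buys a cleaner, realisation-free argument at the cost of using the full semisimplicity of the category rather than just of the endomorphism algebras (which is all Jannsen needs for his equivalence (b)$\Leftrightarrow$(c)); both are valid.
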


\begin{rema}[{\cite[1.15]{Sch94}}] \label{rema:rigidity}
There is a canonical functor ${}^\vee: \cM_\sim(k)^{op} \to \cM_\sim(k)$ defined by $(X, p, m)^\vee = (X, {}^t, d - m)$ when $X$ is purely $d$ dimensional. Here ${}^t: A^{d}_\sim(X \times_S X) \to A^{d}_\sim(X \times_S X)$ corresponds to swapping the $X$s.

One can verify by hand that ${M^\vee}^\vee = M$ and that for any three objects $M, N, P$ one has
\[ \hom(M \otimes N, P) = \hom(M, N^\vee \otimes P). \]
Consequently, the categories $\cM_\sim(k)$ are rigid.
\end{rema}

\subsection{Voevodsky motives}

\begin{defi}
\emph{Correspondences.} Let $S$ be a regular%
\footnote{\label{footnote:smcordisclaimer}The category $\SmCor(S)$ can be defined for any noetherian separated scheme $S$, but we have not mentioned this construction because branch points make composition more subtle. For a more general $S$, the group $\hom_{\SmCor(S)}(X, Y)$ is only a proper subgroup of the free abelian group we describe, since branches of $X$ introduce an ambiguity in the composition of some cycles. In fact, it can be defined as the largest subgroup for which composition is well-defined, as soon as the notion of composition has been formalised appropriately, \cite[Chap{.}2]{Kel12}.} %
 noetherian separated scheme, such as the spectrum of a field. The category $\SmCor(S)$ has as objects smooth $S$-schemes, and $\hom_{\SmCor(S)}(X, Y)$ is the free abelian group generated by closed integral subschemes $Z \subseteq X {{\times_S}} Y$ such that the map $Z \to X$ induced by projection is finite and surjective. Composition of two morphisms $\alpha \in \hom_{\SmCor(S)}(X, Y)$ and $\beta: \hom_{\SmCor(S)}(Y, W)$ is defined as above: by pulling back to the triple product $X {\times_S} Y {\times_S} W$, intersecting, and then pushing forward to $X {{\times_S}} W$. The condition that the generators of the hom groups be finite and surjective over the first component ensures that the two pullbacks to $X {\times_S} Y {\times_S} W$ intersect properly. The category $\SmCor(S)$ is a tensor additive category, with direct sum induced by disjoint union of schemes, and tensor product induced by fibre product. As above, there is a canonical functor
\[ [-]: \Sm(S) \to \SmCor(S) \]
which sends a morphism to the cycle induced by its graph.

\emph{Effective geometric motives.} Since $\SmCor(S)$ is additive, we can consider its bounded homotopy category $K^b(\SmCor(S))$. Define $\HI$ to be the set of complexes of the form
\[ (\dots \to 0 \to [\AA^1_X] \to [X] \to 0 \to \dots) \]
 where $X$ ranges over all smooth $S$-schemes. Define $\NMV$ to be the set of complexes of the form
\[ (\dots \to 0 \to [U{\times_X}V] \stackrel{[k] + [l]}{\to} [U] \oplus [V] \stackrel{[i] - [j]}{\to} [X] \to 0 \to \dots ) \]
ranging over all Nisnevich distinguished squares.%
\footnote{\label{footnote:nisDistSq} A \emph{Nisnevich distinguished square} is a cartesian square
% \begin{equation} \label{equa:nis} \xymatrix@!=6pt{
%U{\times}_X V \ar[r]^-k \ar[d]_-l & V \ar[d]^-j \\
%U \ar[r]_i & X
% } \end{equation}
$\underset{U}{\stackrel{U{\times}_XV}{\downarrow}}\underset{\to}{\stackrel{\to}{\vphantom{\downarrow}}}\underset{X}{\stackrel{V}{\downarrow}}$ %
such that $U \to X$ is an open immersion, $V \to X$ is étale, and $(X {-} U)_{red}{\times}_X V \to (X {-} U)_{red}$ is an isomorphism.% For example, any open cover of $X$ with two elements defines such a square.
}%
\footnote{In \cite{Voe00} Voevodsky only uses Zariski distinguished squares, i.e., those squares for which $j$ is also an open immersion. However, \cite[Thm{.}3.1.12]{Voe00} implies that, at least when the base is a perfect field, the Zariski and Nisnevich versions produce the same category. On the other hand, it is the Nisnevich descent property which is often used in most of the proofs in \cite{Voe00}. Nisnevich locally, closed immersions of smooth schemes look like zero sections of trivial affine bundles, cf. \cite[Proof of Lemma 2.28]{MV99}.}
The category of \emph{effective geometric motives} is defined as
\[ DM_{gm}^{eff}(S) = \biggl ( \frac{K^b(\SmCor(S))}{\langle \HI \cup \NMV \rangle }\biggr )^\natural \]
where $(-)^\natural$ denotes idempotent completion, $\langle \HI \cup \NMV \rangle$ is the smallest thick triangulated subcategory generated by the sets of objects $\HI$ and $\NMV$, and the fraction denotes the Verdier quotient.

\emph{Noneffective geometric motives.} The tensor structure on $\SmCor(S)$ induces a canonical tensor structure on $DM_{gm}^{eff}(S)$, and there is a canonical monoidal functor 
\[ M: \Sm(S) \to DM_{gm}^{eff}(S) \]
induced by $[-]: \Sm(S) \to \SmCor(S)$. As above, projection to infinity defines an idempotent endomorphism of $M(\PP^1_S)$ the Tate motive $\ZZ(1)[2]$ to be the kernel of this endomorphism: $M(\PP^1_S) \cong M(S) \oplus \ZZ(1)[2]$. Since we are working with complexes, we have an explicit model for this:
\[ \ZZ(1) \stackrel{def}{=} (\dots \to 0 \to \underset{2}{[\PP^1_S]} \to \underset{3}{[S]} \to 0 \to \dots) \]
as a complex concentrated in degrees 2 and 3. We obtain the category of \emph{noneffective geometric motives} is defined by forcing $\ZZ(1)$ to be tensor invertible. 
\[ DM_{gm}(S) = DM_{gm}^{eff}(S)[\ZZ(1)^{-1}]. \]
Formally, the objects of $DM_{gm}(S)$ are pairs $(M, m)$ with $M$ an object of $DM_{gm}^{eff}(S)$ and $m \in \ZZ$, and morphisms are $\hom((M, m), (N,n)) = \varinjlim_{k \geq m, n}(M \otimes \ZZ(1)^{k -m}, N \otimes \ZZ(1)^{k -n})$.
\end{defi}

\begin{rema}\ 
\begin{enumerate}
 \item To simplify notation, one usually writes $\ZZ(m) = \ZZ(1)^{\otimes m}$ and $M(m) = M \otimes \ZZ(m)$.

 \item To show that the tensor structure on $DM_{gm}(S)$ is well defined on morphisms, one needs to show that the cyclic permutation $\ZZ(1)^{\otimes 3} \stackrel{\sim}{\to} \ZZ(1)^{\otimes 3}$ is the identity. This is \cite[Cor{.}2.1.5]{Voe00}.
 
 \item The transition morphisms in the colimit defining the hom groups in $DM_{gm}(S)$ are actually all isomorphisms, at least when the base is a perfect field: Voevodsky shows in the ``incredibly short and absolutely ingenious''%
 \footnote{In his MathSciNet review, R{\"o}ndigs attributes this quote to Suslin.} %
 article \cite{Voe10} that $-\otimes \ZZ(1)$ is a fully faithful functor on $DM_{gm}^{eff}(S)$ when $S$ is the spectrum of a perfect field.
\end{enumerate}
\end{rema}

\begin{rema} \label{rema:voevDMgenrel}
Again, this definition is clearly of the generators and relations form. One starts with smooth schemes and correspondences as the generators, and then forces $\AA^1$-invariance, Nisnevich descent, kernels and cokernels of idempotents to exist, and $\ZZ(1)$ to be tensor invertible.
\end{rema}

The most important facts we will need about $DM_{gm}(S)$ are the following.

\begin{theo} \label{theo:dmtheo}
Let $k$ be a perfect field of exponential characteristic $p$.
\begin{enumerate}
 \item The category $DM_{gm}(k)_{\ZZ[1/p]}$ is rigid. If $X$ is a smooth projective $k$ variety of pure dimension $d$, then the dual of $M(X)$ is $M(X)(-d)[-2d]$, \cite[Thm{.}5.5.14]{Kel12}, \cite[Thm{.}4.3.7]{Voe00}.

 \item \label{theo:dmtheo:2} For any smooth $k$-variety $X$, $n \in \ZZ$, $i \geq 0$, there are canonical isomorphisms
 \[ \hom_{DM_{gm}(k)}(M(X), \ZZ(i)[n]) \cong CH^i(X, 2i - n) \]
towards Bloch's higher Chow groups, defined in \cite{Blo86}. In particular, for any smooth $k$-variety $X$ and smooth projective $k$-variety $Y$ of pure dimension $d$, there are canonical isomorphisms
 \[ \hom_{DM_{gm}(k)_{\ZZ[1/p]}}(M(X), M(Y)(i)[n]) \cong CH^{i + d}(X{\times_k}Y, 2i - n)_{\ZZ[1/p]}, \]
\cite[Thm{.}5.6.4]{Kel12}, \cite[Thm{.}19.1]{MVW}, \cite[Thm{.}3.2]{Sus00}.

 \item \label{theo:dmtheo:3} Consequently, there is a canonical fully faithful tensor additive embedding
 \[ \cM_{\rat}(k)_{\ZZ[1/p]} \to DM_{gm}(k)_{\ZZ[1/p]}. \]

 \item \label{theo:dmtheo:4} The category $DM_{gm}(k)_{\ZZ[1/p]}$ is generated (as an idempotent complete tensor triangulated category) by motives of smooth projective $k$-varieties. \cite{Bon11}, \cite[Cor{.}3.5.5]{Voe00}.
\end{enumerate}
\end{theo}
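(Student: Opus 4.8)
The plan is to hang the four parts on one deep input, the first isomorphism of~(2) --- the comparison of $DM$-mapping groups with Bloch's higher Chow groups --- and to deduce the rest formally, using the generation statement~(4) as the other main ingredient. So I would first record, for $X$ smooth over the perfect field $k$, the integral identification $\hom_{DM_{gm}(k)}(M(X),\ZZ(i)[n]) \cong CH^i(X,2i-n)$. Over a field admitting resolution of singularities this is Voevodsky's comparison of his motivic cohomology with higher Chow groups, together with \cite{Blo86}, \cite{Sus00}, \cite[Thm.~19.1]{MVW}; over an arbitrary perfect field of exponential characteristic $p$ one replaces resolution of singularities by Gabber's refinement of de Jong's alterations, which is exactly what forces the coefficients $\ZZ[1/p]$, following \cite[Thm.~5.6.4]{Kel12}. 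I would simply cite this; it is the one place where genuine geometry enters.

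Next I would establish~(4): $DM_{gm}(k)_{\ZZ[1/p]}$ is generated, as an idempotent-complete tensor triangulated category, by the $M(X)$ with $X$ smooth projective. By construction it is generated by the $M(X)$ with $X$ smooth; given such an $X$ one Nagata-compactifies and then uses the Gysin (localisation) triangles and the blow-up and cdh-descent triangles to rewrite $M(X)$ as an iterated cone of Tate twists of motives of smooth projective varieties --- this is valid after inverting $p$ precisely because Gabber's alterations may have degree divisible by $p$; cite \cite{Bon11}, \cite[Cor.~3.5.5]{Voe00}. For~(1) I would then observe that the strongly dualisable objects of any tensor triangulated category form a thick $\otimes$-subcategory, so by~(4) it suffices to prove that $M(X)$ is strongly dualisable with dual $M(X)(-d)[-2d]$ when $X$ is smooth projective of pure dimension $d$. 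That I would reduce to projective space, where the projective bundle formula gives $M(\PP^m)\cong\bigoplus_{j=0}^{m}\ZZ(j)[2j]$, manifestly self-dual up to twist, and propagate along a closed immersion $X\hookrightarrow\PP^N$ via the Gysin triangle, taking the coevaluation to be the class of the diagonal and the evaluation to be intersection with it, and checking the triangle identities on $\PP^m$; this is Voevodsky \cite[\S4.3]{Voe00}, and \cite[Thm.~5.5.14]{Kel12} over a general perfect field. Throughout one uses the K\"unneth identity $M(X)\otimes M(Y)\cong M(X\times Y)$, immediate from the monoidal structure on $\SmCor(k)$.

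The second isomorphism of~(2) is now formal: for $X$ smooth and $Y$ smooth projective of pure dimension $d$,
\[
\hom\bigl(M(X),M(Y)(i)[n]\bigr) \;\cong\; \hom\bigl(M(X)\otimes M(Y)^{\vee},\ZZ(i)[n]\bigr) \;\cong\; \hom\bigl(M(X\times Y),\ZZ(i{+}d)[n{+}2d]\bigr) \;\cong\; CH^{i+d}(X\times Y,2i{-}n)_{\ZZ[1/p]},
\]
using~(1), K\"unneth, and the first isomorphism. For~(3) I would define the functor $\cM_{\rat}(k)_{\ZZ[1/p]}\to DM_{gm}(k)_{\ZZ[1/p]}$ on objects by $(X,p,n)\mapsto \bigl(\operatorname{im}(M(X)\xrightarrow{\,p\,}M(X))\bigr)(-n)[-2n]$ --- the image exists since $DM_{gm}(k)_{\ZZ[1/p]}$ is idempotent complete, and the twist makes sense since $\ZZ(1)$ is tensor invertible by construction. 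Specialising the displayed formula to the classical Chow-group range identifies the $\hom$-groups between motives of smooth projective varieties in $\cM_{\rat}(k)_{\ZZ[1/p]}$ with those in $DM_{gm}(k)_{\ZZ[1/p]}$; one then checks that this identification carries composition of correspondences to composition in $DM_{gm}$ --- which is exactly the content of the graph/correspondence functor of \cite[\S2.1,\S4.2]{Voe00}, \cite{MVW} --- so the functor is full and faithful on such objects, whence full and faithful in general by passing to direct summands and Tate twists, tensor-additivity being K\"unneth once more.

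The work is almost entirely in the inputs rather than in this assembly: the higher Chow comparison and the alteration-theoretic arguments behind the generation statement and behind duality are the substantial steps --- i.e.\ pushing Voevodsky's resolution-of-singularities proofs to an arbitrary perfect field at the cost of inverting the exponential characteristic. Granting those, parts~(2) (second isomorphism) and~(3), and the reduction in~(1), are bookkeeping with the duality formalism together with a single Hom-group computation.
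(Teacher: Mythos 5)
The paper does not actually prove this theorem --- it is a summary of results quoted from the cited literature --- and your sketch is essentially the standard assembly found in those references: the higher Chow comparison and the alteration-based generation statement as the two real inputs, rigidity by reduction to smooth projective varieties using that the strongly dualisable objects form a thick tensor subcategory, the second isomorphism of (2) by duality plus K\"unneth (your degree bookkeeping $2(i+d)-(n+2d)=2i-n$ checks out), and (3) by idempotent completion, Tate twists, and the hom-group identification. One correction: the first isomorphism of (2) holds \emph{integrally} for every perfect field (this is exactly \cite[Thm.~19.1]{MVW}, which requires neither resolution of singularities nor alterations), so Gabber's alterations are not what is being used there and do not force $\ZZ[1/p]$ coefficients in that statement; the inversion of $p$ enters only through duality and generation, hence through parts (1), (4), and the second isomorphism of (2) --- as it happens your assembly invokes $\ZZ[1/p]$ only in those places, so the argument is unaffected, but as written your hedge would leave the integral form of the first isomorphism unproven.
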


\begin{rema}
If one believes in strong resolution of singularities, in the sense of \cite[Def{.}3.4]{FV00}, then one doesn't have to invert $p$ in the above theorem.
\end{rema}

Using the fact that étale cohomology has a structure of transfers, is homotopy invariant, satisfies Nisnevich descent, and is $\PP^1$-stable, one can construct a canonical functor to $l$-adic sheaves.

\begin{theo}[{\cite[Thm{.} 3.1]{Ivo06}, cf. also \cite[Appendix A]{KS}}] \label{theo:etalereal}
For any noetherian separated scheme $S$, and $l$ invertible on $S$, and any $n > 0$ there are canonical tensor triangulated functors
\[ DM_{gm}(S) \to D_{et}(S, \ZZ / l^n), \qquad DM_{gm}(S)_\QQ \to D_{et}(S, \QQ_l). \]
Here, the target categories are the derived categories associated to the small étale site of $S$. If $f: X \to S$ is a smooth morphism, then the image of $M(X)$ is the pushforward $Rf_*(\ZZ / l^n)_X$, resp. $Rf_*(\QQ_l)_X$, of the constant sheaf on the small étale site of $X$. If $S$ is the spectrum of a perfect field, then this also holds for non-smooth $X$.
\end{theo}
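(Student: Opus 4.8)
The plan is to use the ``generators and relations'' presentation of $DM_{gm}(S)$ recorded in Remark~\ref{rema:voevDMgenrel}: construct a reasonable functor out of $\SmCor(S)$, pass to $K^b$, and then verify that the successive localisations imposed in forming $DM_{gm}^{eff}(S)$ and $DM_{gm}(S)$ are respected. The four properties of étale cohomology singled out before the statement --- transfers, homotopy invariance, Nisnevich descent, $\PP^1$-stability --- are exactly what is needed: one for the construction on $\SmCor(S)$, and one for each localisation.

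First I would treat $\ZZ / l^n$-coefficients and build a functor $\SmCor(S) \to D_{et}(S, \ZZ / l^n)$ sending a smooth $f \colon X \to S$ to $Rf_*(\ZZ / l^n)_X$. On honest morphisms it is the usual map on higher direct images of the constant sheaf; on a finite correspondence $Z \subseteq X \times_S Y$, finite surjective over $X$, it is the composite of pullback along $Z \to Y$ with the trace map (in the sense of SGA~4 and SGA~4$\tfrac{1}{2}$) attached to the finite surjective morphism $Z \to X$ --- this is the ``structure of transfers''. The nontrivial point is functoriality: the trace maps must be compatible with the pull--intersect--push composition of correspondences (projection formula, base change, associativity of traces), and the construction must be monoidal (fibre product of schemes versus tensor product of constant sheaves), so that one really gets a $\otimes$-functor on $\SmCor(S)$. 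This coherence is essentially the content of \cite{Ivo06}.

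Having this, extend to $K^b(\SmCor(S))$. Since $l$ is invertible on $S$, $\ZZ / l^n$-étale cohomology is $\AA^1$-invariant, so the complexes in $\HI$ go to zero; étale, hence Nisnevich, cohomology satisfies descent for distinguished squares, so the complexes in $\NMV$ go to zero; thus the functor factors through $DM_{gm}^{eff}(S)$. The computation of $R\pi_*(\ZZ / l^n)$ for $\pi \colon \PP^1_S \to S$ shows $\ZZ(1)$ goes to a $\otimes$-invertible object ($\PP^1$-stability), so it factors through $DM_{gm}(S) = DM_{gm}^{eff}(S)[\ZZ(1)^{-1}]$, and idempotent completion is free because $D_{et}(S, \ZZ / l^n)$ is already idempotent complete. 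The $\QQ_l$-case follows by $- \otimes \QQ$ and the homotopy limit over $n$. A cleaner alternative to the hand-built transfers is to compose the tautological functors $DM_{gm}(S) \to DM_{gm}(S; \ZZ / l^n) \to DM_{gm, et}(S; \ZZ / l^n)$ (mod-$l^n$ reduction, then étale localisation) with the rigidity equivalence $DM_{gm, et}(S; \ZZ / l^n) \xrightarrow{\sim} D^b_{ctf}(S_{et}, \ZZ / l^n) \hookrightarrow D_{et}(S, \ZZ / l^n)$, available since $l$ is invertible on $S$ (Suslin--Voevodsky, in the form of Ayoub or Cisinski--Déglise); compatibility with the six operations then identifies the image of $M(X)$ with $f_\# (\ZZ / l^n)$, which agrees with $Rf_*(\ZZ / l^n)$ up to a Tate twist and on the nose when $f$ is proper, so matching the normalisation in the statement is bookkeeping. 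Finally, for $S = \Spec k$ with $k$ perfect, every $M(X)$ with $X$ singular is assembled from motives of smooth varieties by abstract blow-up / cdh triangles (Voevodsky, \cite{Kel12}), while $Rf_*(\ZZ / l^n)$ satisfies the corresponding proper descent (proper base change, cohomological descent), so the identification propagates to all $X$.

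The main obstacle is the construction on $\SmCor(S)$: making the trace maps of étale cohomology coherent enough to yield an honest $\otimes$-functor, i.e.\ their compatibility with composition of finite correspondences. If instead one goes through rigidity, the difficulty migrates to verifying the rigidity equivalence over an arbitrary noetherian separated $S$ and to tracking the image of $M(X)$ through it.
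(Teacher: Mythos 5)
Your outline is exactly the route the paper has in mind: it states this theorem as a citation to Ivorra (and Kelly--Saito), justified only by the one-line remark that étale cohomology has transfers, is homotopy invariant, satisfies Nisnevich descent, and is $\PP^1$-stable --- the four inputs you deploy against the generators-and-relations presentation of $DM_{gm}(S)$. You also correctly locate the genuinely hard point (coherence of the trace maps with composition of finite correspondences) in the cited reference, so there is nothing to add.
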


%\subsection{Weight structures}

\subsection{Morel-Voevodsky's stable homotopy category}

The definition of $SH(S)$ was sketched in \cite{Voe98} using the unstable theory of \cite{MV99}. A more explicit construction, which includes the use of symmetric spectra is in \cite{Jar00}, and a more modern treatment which incorporates advances in the theory of model categories which happened in the meantime (many of which were motivated precisely for the study of $SH(S)$) appears in \cite[Def{.}4.5.52]{Ayo07}. Even more recently, the universal property which $SH(S)$ satisfies was made formal in \cite[Cor{.}2.39]{Rob15}, using the language of infinity categories. Below we sketch the construction of \cite{Cis13}, which we find to be the most accessible.

\begin{heurDefi} \label{defi:SHinfinity}
Let $S$ be a noetherian scheme. The \emph{Morel-Voevodsky stable homotopy category} $SH(S)$ is the universal tensor triangulated category such that:
\begin{enumerate}
 \item There is a monoidal functor $\Sigma^\infty(-)_+: \Sm(S) \to SH(S)$.
 \item ($\AA^1$-invariance) $\Sigma^\infty(\AA^1_S)_+ \to \Sigma^\infty(S)_+$ is an isomorphism.

 \item (Nisnevich descent) $\Sigma^\infty(-)_+$ sends Nisnevich distinguished squares$^\textrm{\ref{footnote:nisDistSq}}$ to homotopy cocartesian squares. That is, in the notation of Footnote~\ref{footnote:nisDistSq}, the Mayer-Vietoris style triangle
\begin{equation}
  \Sigma^\infty(U{\times_X}V)_+ \stackrel{}{\longrightarrow} \Sigma^\infty U_+ \oplus \Sigma^\infty V_+ \stackrel{}{\longrightarrow} \Sigma^\infty X_+ 
\end{equation}
fits into a distinguished triangle.

 \item ($(\PP^1, \infty)$-Stability) The cofibre of the image of the section at infinity $\infty: S  \to \PP^1_S$ is tensor invertible. That is, $\Cone\bigl (\Sigma^\infty S_+ \stackrel{}{\to} \Sigma^\infty (\PP^1_S)_+ \bigr)$ is tensor invertible.
\end{enumerate}
\end{heurDefi}

The tensor triangulated category $SH(S)$ can be constructed as follows.

\begin{cons}
\cite[1.2, 2.15]{Cis13} Let $Sp_{S^1}(S)$ denote the category of presheaves of symmetric $S^1$-spectra on $\Sm(S)$. This category is equipped with the projective model structure.%
\footnote{The projective model structure is the model structure for which a morphism is a fibration (resp. weak equivalence) if and only if it is a fibration (resp. weak equivalence) of symmetric $S^1$-spectra after evaluation on every $X \in \Sm(S)$.} %
%Every symmetric $S^1$-spectrum, such as the circle $S^1$, gives rise to an object of $Sp_{S^1}(S)$: the constant presheaf. On the other hand, 
Via Yoneda, every smooth $S$-scheme equipped with a section, such as $\PP^1_S$ equipped with the section at infinity, gives rise to an object of $Sp_{S^1}(S)$.%
\footnote{\label{footnote:spYon} The Yoneda functor produces a presheaf of pointed sets $\hom_{\Sm(S)}(-, \PP^1_S)$, and then working schemewise, we associated to every pointed set its induced pointed simplicial set, and from there its associated symmetric $S^1$-spectrum.} %
Let $T$ be an cofibrant replacement%
\footnote{The projective model structure has the nice property that representable presheaves are cofibrant, however, by representable we mean the image of a scheme with a disjoint base point, cf. Footnote~\ref{extrabasepoint}. Presheaves which are the image of pointed schemes whose basepoint is not disjoint are not in general cofibrant. Hence, we need to take some cofibrant model. For example, the pushout of $S_+ \wedge \Delta^1_+ \stackrel{0}{\leftarrow} S_+ \stackrel{\infty}{\to} \PP^1_+$ is a cofibrant model for $(\PP^1_S, \infty)$, where $\Delta^1_+$ is the constant presheaf corresponding to the simplicial interval. This is exactly the analogue of the complex $([S] \stackrel{\infty}{\to} [\PP^1_S])$ which we could have used to define $\ZZ(1)$ in $DM_{gm}^{eff}(S)$.} %
 for the pointed scheme $\PP^1_S$ in $Sp_{S^1}(S)$, and let $Sp_TSp_{S^1}(S)$ denote the category of symmetric $T$-spectra in $Sp_{S^1}(S)$. It comes equipped with a canonical ``Yoneda'' functor%
\footnote{\label{extrabasepoint}We first equip any $S$-scheme $X$ with a base point by replacing it with $X_+ = X \amalg S$. Then using the procedure described in Footnote~\ref{footnote:spYon} we get a functor $\Sm(S) \to Sp_{S^1}(S)$, which we compose with the canonical functor $Sp_{S^1}(S) \to Sp_TSp_{S^1}(S)$.}
\[ \Sigma^\infty(-)_+: \Sm(S) \to Sp_TSp_{S^1}(S), \]
and a ``constant sheaf'' functor%
\footnote{This is actually the composition of the constant presheaf functor $Sp_{S^1} \to Sp_{S^1}(S)$ from symmetric $S^1$-spectra to presheaves of symmetric $S^1$-spectra, and the canonical functor $Sp_{S^1}(S) \to Sp_TSp_{S^1}(S)$.}% 
\begin{equation} \label{equa:constFunctor}
 Sp_{S^1} \to Sp_TSp_{S^1}(S).
 \end{equation}
Let $\HI$ be the set of images in the homotopy category $\Ho(Sp_TSp_{S^1}(S))$ of the morphisms $\Sigma^\infty (\AA^1_X)_+ \to \Sigma^\infty X_+$ as $X$ ranges over all smooth $S$-schemes. Let $\NMV$ be the set of images in $\Ho(Sp_TSp_{S^1}(S))$ of the morphisms $\Cone(\Sigma^\infty (U {\times_X} V)_+ \to \Sigma^\infty U_+ \oplus \Sigma^\infty V_+) \to \Sigma^\infty X_+$ ranging over all distinguished Nisnevich squares$^\textrm{\ref{footnote:nisDistSq}}$ in $\Sm(S)$. We define $SH(S)$ as the Verdier quotient
\[ SH(S) = \frac{\Ho(Sp_TSp_{S^1}(S))}{\langle\!\langle \HI \cup \NMV \rangle\!\rangle}. \]
Here the double angle brackets $\langle\!\langle - \rangle\!\rangle$ indicate the localising category, i.e., the smallest triangulated category closed under direct summands and arbitrary small sums, containing the objects of $\HI$ and $\NMV$.
\end{cons}

Its subcategory of compact objects is the smallest thick triangulated category containing the objects $\Sigma^\infty X_+$ for all $X \in \Sm(S)$. This is denoted by
\[ \SH(S)^c = \langle \Sigma^\infty X_+ : X \in \Sm(S) \rangle \subset SH(S) \]

\begin{rema}
There are also constructions using $S^1 \wedge \GG_m$ (where $\GG_m$ is pointed at the identity) instead of $\PP^1_S$, and constructions which take the $\HI \cup \NMV$ localisations as Bousfield localisations before passing to $T$-spectra. These all produce equivalent categories, \cite[2.15]{Cis13}. 
\end{rema}

\begin{rema} \label{DoldKan}
The canonical inclusion$^\textrm{\ref{footnote:smcordisclaimer}}$ $\Sm(S) \to \SmCor(S)$, and the canonical functor $Sp_{S^1} \to \Cpx(\Ab)$ from symmetric $S^1$-spectra to (unbounded) complexes of abelian groups induces a canonical tensor triangulated functor
\[ SH(S) \to DM(S). \]
Here, $DM(S)$ can defined in the same way as we have defined $SH(S)$, but using $\SmCor(S)$ instead of $\Sm(S)$, and $\Cpx(\Ab)$ instead of $Sp_{S^1}$. As $DM_{gm}(S)$ can be identified with the thick subcategory of $DM(S)$ generated by the motives of smooth schemes, this functor restricts to a functor
\[ SH(S)^c \to DM_{gm}(S). \]
\end{rema}

\begin{theo}[{Morel, \cite{Mor06}, cf. also \cite[§A.3, §C.3]{CD12}}] \label{theo:morel}
If $S$ is the spectrum of a field such that $-1$ is a sum of squares (such as a finite field, or the field of complex numbers), the canonical functor, cf. Remark~\ref{DoldKan},
\[ SH(S)^c_\QQ \to DM_{gm}(S)_\QQ \]
is a $\otimes$-triangulated equivalence of categories.
\end{theo}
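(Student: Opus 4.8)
I would deduce this from Morel's idempotent decomposition of the rational motivic sphere spectrum. Morel's computation of the zeroth stable homotopy sheaf identifies $\hom_{SH(S)}(\un,\un)$ with the Grothendieck--Witt ring $GW(k)$, and in it the class $\epsilon = -\langle -1\rangle$ satisfies $\epsilon^2 = 1$. So, after rationalising, $e_{\pm} = \tfrac{1\mp\epsilon}{2}$ are orthogonal idempotent endomorphisms of $\un_\QQ$ with $e_+ + e_- = \id$, giving a decomposition $\un_\QQ \cong \un^+_\QQ \oplus \un^-_\QQ$ into commutative ring objects $\un^{\pm}_\QQ := e_{\pm}\un_\QQ$ and a corresponding product decomposition of tensor triangulated categories $SH(S)_\QQ \simeq SH(S)^+_\QQ \times SH(S)^-_\QQ$, with $SH(S)^{\pm}_\QQ$ the category of $\un^{\pm}_\QQ$-modules, compatibly with passage to compact objects. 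It then suffices to prove that, under the stated hypothesis, the minus part vanishes and the plus part is rational Voevodsky motives.

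Next I would show $SH(S)^-_\QQ = 0$. The minus part is governed by rational Witt theory: on $\un^-_\QQ$ the class $\langle -1\rangle$ acts by $-1$, the hyperbolic form $h = \langle 1\rangle + \langle -1\rangle$ acts by $0$, and the homotopy sheaves of $\un^-_\QQ$ are built out of the rationalised unramified Witt sheaf $\underline{W}_\QQ$. But if $-1$ is a sum of squares in $k$ then it is a sum of squares in every finitely generated field extension $F/k$, so $W(F)$ is $2$-primary torsion and $W(F)_\QQ = 0$; since $\underline{W}$ is a strictly $\AA^1$-invariant Nisnevich sheaf determined by its function-field stalks, $\underline{W}_\QQ = 0$. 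Hence every homotopy sheaf of $\un^-_\QQ$ vanishes, and since $SH(S)^-_\QQ$ is generated by the (dualisable) objects $\Sigma^\infty X_+$ and their twists, an object with vanishing homotopy sheaves is zero. Thus $\un^-_\QQ = 0$, $SH(S)^-_\QQ = 0$, and $SH(S)_\QQ \simeq SH(S)^+_\QQ$. (When $\mathsf{char}\,k = 2$ this step is trivial: then $\epsilon = -1$ and $e_- = 0$.)

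The heart of the matter, and the step I expect to be the main obstacle, is the identification $SH(S)^+_\QQ \simeq DM(S)_\QQ$. On the plus part the Hopf map $\eta$ acts by $0$ (from the relation $\eta h = 0$ and the fact that $h$ acts invertibly there), so $\un^+_\QQ$ carries an orientation; the substantive theorem (Morel \cite{Mor06}, Cisinski--Déglise \cite{CD12}) is that the canonical comparison map identifies $\un^+_\QQ$ with the rational motivic cohomology (Beilinson) spectrum $\mathsf{H}\QQ_S$, the unit of $DM(S)_\QQ$, so that $SH(S)^+_\QQ$, being the category of $\mathsf{H}\QQ_S$-modules, is equivalent to $DM(S)_\QQ$. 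Unwinding the construction, the composite equivalence $SH(S)_\QQ \xrightarrow{\ \sim\ } DM(S)_\QQ$ is the rationalisation of the canonical ``add transfers'' functor of Remark~\ref{DoldKan}: it is monoidal and sends $\Sigma^\infty X_+$ to $M(X)$.

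Finally I would descend to compact objects. The equivalence $SH(S)_\QQ \xrightarrow{\sim} DM(S)_\QQ$ is a monoidal functor of compactly generated triangulated categories carrying the generating family $\{\Sigma^\infty X_+ : X \in \Sm(S)\}$ onto the generating family $\{M(X) : X \in \Sm(S)\}$ of $DM(S)_\QQ$, so it restricts to an equivalence between the thick $\otimes$-subcategories they generate; these are $SH(S)^c_\QQ$ and, by Remark~\ref{DoldKan}, $DM_{gm}(S)_\QQ$. To summarise, the $\epsilon$-decomposition, the vanishing of the minus part, and the descent to compacts are all essentially formal; the single genuinely deep input --- and hence the main obstacle --- is the theorem that the plus part of the rational motivic sphere spectrum is the Beilinson motivic cohomology spectrum.
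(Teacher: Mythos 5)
Your proposal is correct and follows exactly the argument of the references the paper cites for this theorem (Morel, and Cisinski--D\'eglise): the $\epsilon$-idempotent splitting $SH(S)_\QQ \simeq SH(S)^+_\QQ \times SH(S)^-_\QQ$, the vanishing of the minus part because the Witt ring of every finitely generated extension of $k$ is torsion when $-1$ is a sum of squares, and the identification of the plus part with rational (Beilinson/Voevodsky) motives, followed by restriction to compact generators. The paper gives no proof beyond the citation, and you correctly isolate the one genuinely deep input, namely that $\un^+_\QQ$ is the rational motivic cohomology spectrum.
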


\begin{rema}
Morel proved this theorem is true for the rational versions of the big categories $SH(S)$ and $DM(S)$, but Definition~\ref{defi:tensorLambda} does not work properly for noncompact objects, so some care must be taken with the term ``rational version''. Either one can localise at all morphisms $n \cdot \id_M$ for all objects $M$ and all integers $n \neq 0$, or since the rational versions of $Sp_S^1$ and $\Cpx(\Ab)$ are Quillen equivalent to the category of unbounded complexes of $\QQ$-vector spaces, one could just use these in the constructions instead.
\end{rema}

The study of the motivic stable homotopy groups of spheres, i.e., $\hom_{SH(S)}(\un[p + q], T^q)$ for $p, q \in \ZZ$ is one of the central problems in motivic homotopy theory.

\begin{theo} \label{theo:motStabHomGrps}
Suppose that $k$ is the spectrum of a perfect field.
\begin{enumerate}
 \item $\hom_{SH(k)}(\un[p {+} q], T^q) = 0$ if $p < 0$, \cite[Thm{.}4.2.10, \S6 Intro{.}]{Mor04}.

 \item The graded ring $\oplus_{q \in \ZZ} \hom_{SH(k)}(\un[q], T^q)$ is canonically isomorphic to the graded associative ring $K_\bullet^{MW}(k)$ generated by symbols $[a], a \in k^*$, of degree $1$, and one symbol $\eta$ of degree $-1$, subject to the following relations.
 \begin{enumerate}
 \item $[a][1-a] = 0$.
 \item $[ab] = [a] + [b] + \eta[a][b]$.
 \item $\eta[a] = [a]\eta$.
 \item $\eta \cdot h = 0$, where $h = 1 + (\eta[-1] + 1)$, \cite[Def{.}12, Cor{.}25]{Mor12}.
\end{enumerate}
In degree zero, we have the Grothendieck-Witt group $GW(k) \cong K^{MW}_0(k)$, \cite[Cor{.}24]{Mor12}. If the characteristic of $k$ is not 2, then for every $n \in \ZZ$, there is a canonical short exact sequence of abelian groups
\begin{equation} \label{equa:MWses}
 0 \to I(k)^{n + 1} \to K^{MW}_n(k) \to \underbrace{K^M_n(k)}_{\cong K^{MW}_n(k) / \eta} \to 0 
 \end{equation}
where $I(k) = \ker(W(k) {\to} \ZZ / 2)$ is the augmentation ideal in the Witt ring $W(k)$ of the field $k$, we set $I(k)^n = W(k)$ for $n < 0$ by convention, and $K^M_n(k)$ is the Milnor $K$-theory of the field $k$, \cite{Mor04SurLes}, \cite[Def{.}3.7, Thm{.}3.8, Thm{.}5.4]{GSZ}\footnote{There are some weird sign and bracket conventions in \cite{GSZ}. In \cite{GSZ}, $\hat{\eta}$ and $\{a\}$ are used to denote what \cite{Mor12} writes as $\eta$ and $[a]$. On the other hand, \cite{GSZ} use $\eta$ and $[a]$ for the elements in $K^{W} \stackrel{def}{=} K^{MW} / h$ corresponding to our $\eta$ and $-[a]$.}.

 \item If $k$ is algebraically closed of characteristic zero then $\hom_{SH(k)}(\un[n], \un) \cong \pi^s_n$, the classical stable homotopy groups, \cite[Cor{.}2]{Lev14}. If $k$ is an algebraic closure of a finite field with $p$ elements, and $l$ is a prime different from $p$, then the we have the same result after $l$-completion $\hom_{SH(k)}(\un[n], \un)_l^{\wedge} \cong (\pi^s_n)_l^{\wedge}$, \cite[Thm{.}A]{WO16}. In fact, for any subfield $k \subseteq \CC$, the $\otimes$-triangulated functor $SH_\top \to SH(k)$ induced by the constant presheaf funtor, cf. Equation~\eqref{equa:constFunctor}, is fully faithful.
 
 \item \label{theo:motStabHomGrps:split} In fact, let $k \subseteq \CC$ be a subfield. Then sending a smooth $k$-scheme $X$ to its complex valued points $X(\CC)$ considered as a topological space in the obvious way induces a $\otimes$-triangulated functor $SH(k) \to SH_\top$ towards the classical stable homotopy category, \cite{Ayo10}, which is a retraction of the $\otimes$-triangulated functor $SH_\top \to SH(k)$ induced by the constant presheaf funtor, cf. Equation~\eqref{equa:constFunctor}.

 \item There are canonically defined ``Hopf'' elements $\eta \in \hom(\un[-1], T^{-1})$, $\nu \in \hom(\un[-1], T^{-2})$, $\sigma \in \hom(\un[-1], T^{-4})$, satisfying the relations $(1 - \epsilon)\eta = \eta \nu = \nu\sigma = 0$, where $\epsilon: \un[-1] \to T^{-1}$ corresponds to the map $\GG_m \otimes \GG_m \to \GG_m \otimes \GG_m$ swapping the factors, \cite[Def{.}4.7]{DI13}.
 % $\pi_{p, q}  = \hom(S^{p,q}, \un)$. $S^1 = \un[1], S^{1,1} = T[-1]$, $S^{p,q} = T^q[p-2q]$, \pi_{p,q} = \hom(T^q[p-2q], \un) = \hom(\un[p-q-q], T^{-q})$,
\end{enumerate}
\end{theo}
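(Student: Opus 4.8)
\emph{Overall plan.} The statement bundles several independent theorems; I would organise the proof around Morel's homotopy $t$-structure on $SH(k)$, whose connective part is generated under colimits, extensions and positive suspensions by the $\Sigma^\infty X_+$ and whose heart is the abelian category of homotopy modules: strictly $\AA^1$-invariant Nisnevich sheaves $M_*$ on $\Sm(k)$ together with contraction isomorphisms $M_n \xrightarrow{\sim} (M_{n+1})_{-1}$, where $(-)_{-1}$ denotes $\GG_m$-contraction. Part~(1) is Morel's stable $\AA^1$-connectivity theorem: $\Sigma^\infty X_+$ is connective for $X$ smooth, equivalently $\hom_{SH(k)}(\un[p+q],T^q)=0$ for $p<0$. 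I would reduce this to the statement that $\GG_m$-loops preserve connectivity and then invoke the structure theory of strictly $\AA^1$-invariant sheaves---they are unramified, carry a Gersten (coniveau) resolution, and have vanishing Nisnevich cohomology above the Krull dimension---whose geometric heart is Gabber's presentation lemma and Morel's refinements (this is where perfectness of $k$ enters). Granting~(1), the graded object $\underline{\pi}_0(\un)_* := (\underline{\pi}_{-*}(\un))_{-*}$ is a well-defined homotopy module with $\hom_{SH(k)}(\un[q],T^q) = \underline{\pi}_0(\un)_q(k)$, so Part~(2) becomes the problem of identifying the sheaf $\underline{\pi}_0(\un)_*$ with the unramified Milnor--Witt sheaf $\underline{K}^{MW}_*$ and then evaluating at $\Spec k$.

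\emph{The computation $\underline{\pi}_0(\un)_* \cong \underline{K}^{MW}_*$ (Part~(2)).} I would build a morphism $\underline{K}^{MW}_* \to \underline{\pi}_0(\un)_*$ out of symbols: $[a]$ (for $a$ a unit) to the class of the pointed map $\Spec k \xrightarrow{a} \GG_m$, viewed in $\hom_{SH(k)}(\un[1],T^1)$ via $\PP^1 \simeq S^1 \wedge \GG_m$, and $\eta$ to the stabilised algebraic Hopf map $\AA^2{\setminus}0 \to \PP^1$; the defining relations of $K^{MW}_\bullet$ are then checked geometrically---the Steinberg relation $[a][1{-}a]=0$ by the standard $\AA^1$-contraction living over $\PP^1{\setminus}\{0,1,\infty\}$, the rest by manipulations with $\GG_m^{\wedge 2}$. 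Surjectivity follows because, after~(1), the heart is controlled by these symbols. Injectivity is the real content, and I would get it by gluing two localisations: killing $\eta$ should recover Milnor $K$-theory, $\underline{\pi}_0(\un)_*/\eta \cong \underline{K}^M_*$, using $\underline{\pi}_0(\HZ)_* \cong \underline{K}^M_*$ (Nesterenko--Suslin--Totaro, i.e.\ $H^{n}_{\mathrm{mot}}(k,\ZZ(n)) \cong K^M_n(k)$) plus the fact that $\un/\eta \to \HZ$ is $0$-connected, while inverting $\eta$ should recover the Witt sheaf, $\underline{\pi}_0(\un)_*[\eta^{-1}] \cong \underline{W}$; these are then assembled by the Cartesian square exhibiting $\underline{K}^{MW}_n$ as the fibre product of $\underline{K}^M_n$ and $I^n$ over $K^M_n/2 \cong I^n/I^{n+1}$, the last isomorphism being the Milnor conjecture on quadratic forms (Orlov--Vishik--Voevodsky). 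Evaluating at $k$ and unwinding the fibre product yields the exact sequence \eqref{equa:MWses}---this is where $\mathrm{char}\,k \neq 2$ is needed---and $K^{MW}_0(k) = GW(k)$ in degree $0$.

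\emph{Betti realisation (Parts~(3) and~(4)).} For $k \subseteq \CC$ I would left Kan extend $X_+ \mapsto \Sigma^\infty \operatorname{Sing}_\bullet X(\CC)_+$ along $\Sm(k) \to SH(k)$: because $\AA^1(\CC)$ is contractible, $X(\CC)$ takes Nisnevich (indeed étale) squares to homotopy pushouts, and $\PP^1(\CC) = S^2$, this descends to a monoidal triangulated functor $\operatorname{Re}_B\colon SH(k) \to SH_\top$ (Ayoub's construction). Since $\operatorname{Re}_B$ returns $E$ from the constant presheaf on a topological spectrum $E$, one gets $\operatorname{Re}_B \circ c \simeq \id$ where $c\colon SH_\top \to SH(k)$ is the functor of \eqref{equa:constFunctor}; hence $c$ is a split monomorphism, which is Part~(4) and shows $c$ is faithful. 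Upgrading ``faithful'' to ``fully faithful'' in~(3)---equivalently, showing $\hom_{SH(k)}(\un[n],\un) \cong \pi_n^s$ for $k$ algebraically closed of characteristic $0$---is Levine's comparison theorem, which I would invoke as a black box; its proof compares the slice (Postnikov) towers of $\un$ in $SH(k)$ and $SH_\top$ and induces up the cellular filtration, the zeroth slice being $\HZ$ with its computed homotopy. The $l$-complete statement over $\overline{\FF}_p$ (Wilson--Østvær) I would deduce from Levine's result via the étale realisation of Theorem~\ref{theo:etalereal} and Gabber--Suslin rigidity.

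\emph{Hopf elements (Part~(5)) and the main obstacle.} I would take $\eta$ to be the stabilisation of $\AA^2{\setminus}0 \to \PP^1$ and $\nu$, $\sigma$ to be the quaternionic and octonionic Hopf maps of Dugger--Isaksen; their bidegrees in $\bigoplus_q \hom(\un[q],T^q)$ and beyond are forced by the cell structures of $\AA^2{\setminus}0$, $\AA^4{\setminus}0$, $\AA^8{\setminus}0$ and of $\PP^1$, $(\PP^1)^{\wedge 2}$, $(\PP^1)^{\wedge 4}$ together with the requirement that they realise to the classical $\eta,\nu,\sigma$. The relations $(1-\epsilon)\eta = \eta\nu = \nu\sigma = 0$ would be produced by explicit $\AA^1$-nullhomotopies over $\ZZ[1/2]$, the first of them being the identity $\eta h = 0$ already used in~(2). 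The hard part of the whole package is the injectivity half of~(2)---that $\underline{K}^{MW}_*$ surjects onto $\underline{\pi}_0(\un)_*$ with \emph{no} further relations. The two localisations it is glued from are each deep: the $\eta=0$ side rests on Voevodsky's proof of the Bloch--Kato/Milnor conjecture (to identify $\underline{\pi}_0(\HZ)_*$), the $\eta$-inverted side on the Milnor conjecture on quadratic forms (to control $I^n/I^{n+1}$), and stitching the two together requires the full force of Morel's analysis of the homotopy $t$-structure and of strictly $\AA^1$-invariant sheaves.
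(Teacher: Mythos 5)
This theorem is not proved in the paper at all: it is a compendium of results of Morel, Levine, Wilson--{\O}stv{\ae}r, Ayoub and Dugger--Isaksen, stated with citations, and only the statements (non-nilpotence of $\eta$, the value of $K^{MW}_\bullet(\FF_q)$, the vanishing for $p<0$) are used later. So your proposal cannot be compared with an argument in the text; judged as a roadmap to the cited literature it is broadly accurate: (1) is Morel's stable $\AA^1$-connectivity theorem resting on Gabber's presentation lemma, (3)--(4) are Ayoub's Betti realisation giving the retraction plus Levine's comparison and rigidity-type arguments for the $\overline{\FF}_p$ case, and (5) is Dugger--Isaksen.

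Two caveats on your treatment of part (2), where you genuinely deviate from Morel's proof rather than merely black-boxing it. First, ``surjectivity follows because, after (1), the heart is controlled by these symbols'' is exactly the non-formal content of Morel's theorem: that $\underline{\pi}_0(\un)_*$ is generated by the classes of units and $\eta$ subject to no further relations is established through his explicit construction of unramified Milnor--Witt $K$-theory as the free strictly $\AA^1$-invariant sheaf on $\GG_m^{\wedge n}$, and does not follow from connectivity plus the existence of a Gersten resolution. Second, the $\eta=0$ half of your gluing, the claim that $\un/\eta \to \HZ$ is $0$-connected, is itself a statement of essentially the same depth as the theorem being proved (it is deduced from Morel's computation or from the identification of the zeroth slice), so as written that leg of the argument is close to circular; the $\eta$-inverted identification with the Witt sheaf likewise is a theorem of Morel rather than an input one can assume. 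In Morel's actual proof the fibre-product square $K^{MW}_* \cong K^M_* \times_{I^*/I^{*+1}} I^*$ (with the Orlov--Vishik--Voevodsky isomorphism, hence the hypothesis $\mathrm{char}\,k \neq 2$) enters only in producing the short exact sequence \eqref{equa:MWses} and the comparison with powers of the fundamental ideal, which is precisely what the paper's citations to \cite{Mor04SurLes} and \cite{GSZ} record; the identification of $\oplus_q \hom_{SH(k)}(\un[q],T^q)$ with $K^{MW}_\bullet(k)$ itself does not pass through those localisations.
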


\begin{rema}
For more calculations about motivic stable homotopy groups of spheres, see, for example, \cite{DI10}, \cite{HKO}, \cite{OO13}, \cite{OO14}, and the references therein.
\end{rema}

\begin{rema} \label{rema:kmwdescr}
In the $K_\bullet^{MW}(k)$ description of $\oplus_{q \in \ZZ} \hom_{SH(k)}(\un[q], T^q)$, the symbol $[a]$ corresponds to the section $k \to \GG_m$ associated to the rational point $a \in \GG_m(k)$, and the symbol $\eta$ corresponds to the Hopf map $\AA^2 {-} \{0\} \to \PP^1$, under the isomorphisms \cite[Lem{.} 4.1]{Voe98}
\[ (\AA^n {-} \{0\}, 1) \cong T^n[-1], \qquad \PP^n / \PP^{n - 1} \cong \AA^n / (\AA^n {-} \{0\}) \cong T^n. \] 
The element $h$ corresponds to the class of the hyperbolic plane in $GW(k) \cong K_0^{MW}(k)$, \cite[\S 6.2, \S 6.3]{Mor04}.
\end{rema}

\begin{rema} \label{rema:MWdescription}
The maps $I^{n + 1} \to K_n^{MW}$ in the short exact sequence \eqref{equa:MWses} are as follows. First, there is an isomorphism of graded rings $\oplus_{n \in \ZZ} I^n \cong K^W_\bullet \stackrel{def}{=} (\oplus _n K^{MW}_n) / h$ where $[a] \in K^{W}_1$ corresponds to the Pfister form $\langle 1, -a \rangle \in I$ and $1 + \eta [a] \in K^{MW}_0$ corresponds to the one dimensional quadratic space $\langle a \rangle \in W$. For $n \leq 0$, multiplication by $\eta$ induces the isomorphism $K^W_n \to K^W_{n - 1}$ corresponding to $W^n = W^{n - 1}$ \cite[p{.}13]{GSZ}. Next, since $\eta \cdot h = 0$, multiplication by $\eta$ induces a map $K^W_\bullet = K^{MW}_\bullet / h \to  K^{MW}_{\bullet - 1}$. The map in question is the composition
\[ I^{n + 1} \cong K^W_{n + 1} \stackrel{-\cdot \eta}{\to} K^{MW}_{n}. \]
\end{rema}

\begin{rema}
The prime homogeneous ideals of $K^{MW}(k)$ for any field $k$ of characteristic not 2 are classified in \cite{Tho15}.
\end{rema}

\begin{exam} \label{exam:finiteFieldMW}
Let $k$ be a finite field with $q$ elements, and $q$ odd. Then 
%\[
%K^{MW}(\FF_q) \cong
%\begin{array}{ccccc}
%\ZZ / (q-1) [\omega] & \oplus & \ZZ / 2[\omega]\eta & \oplus & 0 \\
%\oplus & & \oplus & & \oplus  \\
%\ZZ & \oplus & \ZZ / 4 \eta & \oplus & \ZZ / 4 \eta^2 
%\end{array}
%\]
%
\[ K^{MW}_{\geq 0}(\FF_q) \cong \biggl ( \ZZ \oplus \ZZ / 2 \biggr ) \oplus \FF_q^\ast \oplus 0 \oplus 0 \oplus \dots, \] 
and for $n > 0$ we have
\[ K^{MW}_{-n}(\FF_q) = \left \{ \begin{array}{ll} \ZZ / 4 & q \equiv 3 \mod 4, \\ \ZZ / 2[\epsilon] / \epsilon^2 & q \equiv 1 \mod 4, \end{array} \right . \]
\cite[p{.}37]{Lam05}, \cite[p{.}36, Thm{.}3.5]{Lam05}, \cite[Exam{.}1.5]{Mil69}.

Let $\omega$ be a multiplicative generator for $\FF_q^*$. Studying Remark~\ref{rema:MWdescription} we find that $K^{MW}_1$ is additively generated by $[\omega]$, the copy of $\ZZ / 2$ in $K^{MW}_0$ is additively generated by $\eta \cdot [\omega]$, and for $n > 0$, $K^{MW}_{-n}$ is additively generated by $\eta^n$ if $q \equiv 3 \mod 4$, or by $\eta^n$ and $\eta^{n + 1}[\omega]$ if $q \equiv 1 \mod 4$. %We have $[\omega][\omega] = 0$ since $K_2^{MW} = 0.%

If $\FF$ is the algebraic closure of $\FF_q$, then we have 
%\[ K^{MW}_{n}(\FF) \cong \left \{ \begin{array}{ll} 0 & n > 1, \\ \FF^* & n = 1, \\ \ZZ & n = 0, \\ \ZZ / 2 & n < 0. \end{array} \right . \] 
\[ K_n^{MW}(\FF) \cong \dots \ \oplus\  \underset{-3}{\ZZ / 2} \ \oplus\  \underset{-2}{\ZZ / 2} \ \oplus\  \underset{-1}{\ZZ / 2} \ \oplus\  \underset{0}{\ZZ} \ \oplus\  \overset{\cong \hat{\ZZ}}{\underset{1}{\FF^*}} \ \oplus\  \underset{2}{0} \ \oplus\  \underset{3}{0} \ \oplus\  \underset{4}{0} \ \oplus\  \dots. \]

%Let $\omega$ be a generator for $\FF^*_q$. Notice that $\langle 1, -\omega \rangle \neq 0$ in $I$ since the discriminant $\langle a_1, \dots, a_n \rangle \mapsto (-1)^{n(n-1)/2} a_1\dots a_n$ induces an isomorphism $I / I^2 \stackrel{\sim}{\to} \FF^*_q / (\FF_q^*)^2$. Then we have
%\[ \begin{array}{rll}
%K_1^{MW} &= \{ n \cdot [\omega] :  n = 0, \dots, q - 1 \} & \\
%K_0^{MW} &= \ZZ \oplus \{ n \cdot \eta \cdot [\omega] :  n = 0, 1 \} & \\
%K_{-m}^{MW} &= \{ n \cdot \eta^m :  n = 0, 1, 2, 3 \} & \textrm{ if } m > 0, q \equiv 3 \mod 4 \\
%K_{-m}^{MW} &= \{ n_0 \cdot \eta^m  + n_1 \cdot \eta^{m + 1} \cdot [\omega] :  n_0, n_1 = 0, 1 \}  &\textrm{ if } m > 0, q \equiv 1 \mod 4
%\end{array} \]
\end{exam}

%\section{$\Spc(SH(\FF_q)^c_\QQ)$}
\section{Observations}
\label{sec:observations}

In this section we make some observations and guesses about the structure of $\Spc(SH(\FF_q)^c)$.

\subsection{Rational coefficients}

\begin{theo} \label{theo:main}
Let $\FF_q$ be a field with a prime power, $q$, number of elements. Suppose that for all connected smooth projective varieties $X$ we have:
\[
\begin{array}{rllr}
CH^i(X, j)_\QQ &= 0,  & \forall j \neq 0, i \in \ZZ & \textrm{(Beilinson-Parshin conjecture)}, \\
CH^i(X)_\QQ \otimes CH_i(X)_\QQ &\to CH_0(X)_\QQ & \textrm{ is non-degenerate.} & \textrm{(Rat.\ and num.\ equiv.\ agree)} 
\end{array}
\]
Then 
\[ \Spc(SH(\FF_q)^c_\QQ)  \cong \Spec(\QQ).\]
\end{theo}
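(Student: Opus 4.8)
The plan is to reduce the computation of $\Spc(SH(\FF_q)^c_\QQ)$ to the computation of $\Spc$ of the numerical motives category $\cM_{\num}(\FF_q)_\QQ$, and then apply Lemma~\ref{lemm:rigssfield}. First, by Theorem~\ref{theo:morel} (which applies since $-1$ is a sum of squares in a finite field) we have a $\otimes$-triangulated equivalence $SH(\FF_q)^c_\QQ \simeq DM_{gm}(\FF_q)_\QQ$, so it suffices to compute $\Spc(DM_{gm}(\FF_q)_\QQ)$. Next I would use the two conjectural hypotheses to show that the rational Chow motives category $\cM_{\rat}(\FF_q)_\QQ$ is already semisimple with simple unit, hence that the embedding $\cM_{\rat}(\FF_q)_\QQ \hookrightarrow DM_{gm}(\FF_q)_\QQ$ of Theorem~\ref{theo:dmtheo}\eqref{theo:dmtheo:3} lets us control the whole category.

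The key steps, in order: \emph{(1)} The Beilinson--Parshin hypothesis says $CH^i(X,j)_\QQ = 0$ for $j\neq 0$ and all smooth projective $X$. Via Theorem~\ref{theo:dmtheo}\eqref{theo:dmtheo:2}, this translates into the statement that $\hom_{DM_{gm}(\FF_q)_\QQ}(M(X), M(Y)(i)[n]) = 0$ unless $n = 0$ (here one uses $X\times Y$ smooth projective and runs over all Tate twists). In other words, the Hom-groups between twists of motives of smooth projective varieties are concentrated in cohomological degree zero, which means the full additive subcategory of $DM_{gm}(\FF_q)_\QQ$ on objects $M(X)(i)$ is ``discrete'' in the derived sense. \emph{(2)} Combined with Theorem~\ref{theo:dmtheo}\eqref{theo:dmtheo:4}, that $DM_{gm}(\FF_q)_\QQ$ is generated as an idempotent-complete $\otimes$-triangulated category by motives of smooth projective varieties, step (1) should give a $\otimes$-triangulated equivalence $DM_{gm}(\FF_q)_\QQ \simeq K^b(\cA)$ where $\cA$ is the idempotent completion of the additive category generated by the $M(X)(i)$ — this is exactly $\cM_{\rat}(\FF_q)_\QQ$. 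The point is that when a bounded homotopy category is generated by objects with no negative (or positive) Ext's among themselves, every object splits as a sum of shifted ``cohomology'' objects, just as in the proof of Lemma~\ref{lemm:rigssfield}. \emph{(3)} The second hypothesis (non-degeneracy of the intersection pairing $CH^i(X)_\QQ\otimes CH_i(X)_\QQ \to CH_0(X)_\QQ$) says precisely that rational and numerical equivalence coincide on $\FF_q$, so $\cM_{\rat}(\FF_q)_\QQ = \cM_{\num}(\FF_q)_\QQ$, which is semisimple by Jannsen's Theorem~\ref{theo:jannsen}, and rigid by Remark~\ref{rema:rigidity}. Its unit $M(\Spec \FF_q) = \un$ is simple because $\hom(\un,\un) = CH^0(\Spec\FF_q)_\QQ = \QQ$ is a field. \emph{(4)} Apply Lemma~\ref{lemm:rigssfield}: $\Spc(K^b(\cM_{\num}(\FF_q)_\QQ))$ has a unique point $\{0\}$, so $\Spc(SH(\FF_q)^c_\QQ)$ is a single point. \emph{(5)} Finally, for the scheme-theoretic statement, compute the structure sheaf: by Theorem~\ref{theo:structureMap} the ring at the unique point is $\hom_{SH(\FF_q)^c_\QQ}(\un,\un) = CH^0(\Spec\FF_q)_\QQ = \QQ$, giving $\Spc(SH(\FF_q)^c_\QQ) \cong \Spec(\QQ)$ as ringed spaces.

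The main obstacle I expect is step (2): carefully showing that vanishing of Ext-groups in nonzero degrees among generators, together with idempotent-complete $\otimes$-triangulated generation, forces $DM_{gm}(\FF_q)_\QQ \simeq K^b(\cA)$ with $\cA$ the pseudo-abelian additive hull of the generators. One has to be careful that the generators $M(X)(i)$ are closed under the relevant operations (tensor, duals via Theorem~\ref{theo:dmtheo}\eqref{theo:dmtheo:2}), that the thick subcategory they generate inside $K^b(\cA)$ is everything, and that no collapsing happens under the localization defining $DM_{gm}$ — essentially one needs that the composite $K^b(\cM_{\rat}(\FF_q)_\QQ) \to DM_{gm}(\FF_q)_\QQ$ is an equivalence, not merely fully faithful on the heart. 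A clean way to organize this is to note that $\cM_{\num}(\FF_q)_\QQ$ semisimple implies $K^b$ of it is already idempotent-complete and that the functor from it to $DM_{gm}(\FF_q)_\QQ$ is fully faithful on all of $K^b$ (using that Homs in $K^b(\cA)$ for semisimple $\cA$ are just the degree-zero Homs, matched with step (1)) and essentially surjective by the generation statement. Modulo that bookkeeping, the result follows from the two named lemmas.
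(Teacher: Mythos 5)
Your proposal is correct and follows essentially the same route as the paper's proof: Morel's theorem to pass to $DM_{gm}(\FF_q)_\QQ$, Beilinson--Parshin for full faithfulness of $\bigoplus_{i\in\ZZ}\cM_{\num}(\FF_q)_\QQ \cong K^b(\cM_{\num}(\FF_q)_\QQ) \to DM_{gm}(\FF_q)_\QQ$, generation by smooth projective motives for essential surjectivity, and then Jannsen's semisimplicity plus Lemma~\ref{lemm:rigssfield} and Remark~\ref{rema:rigidity} to get the unique (zero) prime, with the structure sheaf given by $\hom(\un,\un)\cong\QQ$. Your explicit check that $\un$ is simple (via $\hom(\un,\un)=\QQ$ being a field) is a detail the paper leaves implicit, but otherwise the arguments coincide.
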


\begin{proof}
First we observe that by Theorem~\ref{theo:morel} of Morel, the canonical functor $SH(\FF_q)^c_\QQ \to DM_{gm}(\FF_q)_\QQ$ is an equivalence. So we are reduced to studying $DM_{gm}(\FF_q)_\QQ$. 

On the other hand, since we are assuming that rational and numerical equivalence agree, the canonical functor $K^b(\mathcal{M}_{\rat}(\FF_q)_\QQ) {\to} K^b(\mathcal{M}_{\num}(\FF_q)_\QQ)$ is an equality. Let us write $\cM(\FF_q)_\QQ$ for the category $\mathcal{M}_{\rat}(\FF_q)_\QQ {=} \mathcal{M}_{\num}(\FF_q)_\QQ$. Jannsen's semisimplicity theorem, Theorem~\ref{theo:jannsen}, says that $\mathcal{M}(\FF_q)_\QQ$ is semisimple, and therefore $K^b(\mathcal{M}(\FF_q)_\QQ) \cong \bigoplus_{i \in \ZZ} \mathcal{M}(\FF_q)_\QQ$ as a tensor triangulated category, cf. the proof of Lemma~\ref{lemm:rigssfield}. It follows that the canonical (tensor) functor $\mathcal{M}_{\rat}(\FF_q)_\QQ \to DM_{gm}(\FF_q)_\QQ$ of Theorem~\ref{theo:dmtheo}\eqref{theo:dmtheo:3} extends to a (tensor) triangulated functor 
\begin{equation} \label{eq:canFunMDM}
\bigoplus_{i \in \ZZ} \mathcal{M}(\FF_q)_\QQ \to DM_{gm}(\FF_q)_\QQ. 
\end{equation}
For smooth projective varieties $X$ and $Y$, the Beilinson-Parshin conjecture says that 
\[ \hom_{DM_{gm}(\FF_q)_\QQ}(M(X), M(Y)[i]) \stackrel{Thm{.}\ref{theo:dmtheo}\eqref{theo:dmtheo:2}}{\cong} CH^{\dim Y}(X {\times} Y, -i) \]
vanishes unless $i = 0$. It follows that the functor \eqref{eq:canFunMDM} is fully faithful. Since $DM_{gm}(\FF_q)_\QQ$ is generated by motives of smooth projective varieties, Theorem~\ref{theo:dmtheo}\eqref{theo:dmtheo:4}, the functor \eqref{eq:canFunMDM} is also essentially surjective. That is, it is an equivalence of tensor triangulated categories. Finally we apply Lemma~\ref{lemm:rigssfield} and Remark~\ref{rema:rigidity} to notice that $\cM(\FF_q)_\QQ$ has a unique prime: the zero prime. Consequently, the same is true for $DM_{gm}(\FF_q)_\QQ$. For the structure sheaf: we have $\hom_{\cM_{\rat}(\FF_q)_\QQ}(\un, \un) \cong \QQ$ by definition.
\end{proof}

\begin{rema}
If one is willing to accept that Bondarko's weight complex functor $DM_{gm}(\FF_q)_\QQ \stackrel{t}{\to} K^b(\mathcal{M}_{\rat}(\FF_q)_\QQ)$ is monoïdal, there is a much more conceptual approach to the above proof. One considers the sequence of monoïdal functors
\[ SH(\FF_q)^c_\QQ \to DM_{gm}(\FF_q)_\QQ \stackrel{t}{\to} K^b(\mathcal{M}_{\rat}(\FF_q)_\QQ) {\to} K^b(\mathcal{M}_{\num}(\FF_q)_\QQ). \]
The first one is an equivalence by Morel, the second one is an equivalence if and only if the Beilinson-Parshin conjecture holds, \cite{Bon09}, and the third one is an equivalence if and only if $\rat = \num$. We didn't use this because we didn't want to claim without proof that $t$ is monoïdal, we didn't want to include a proof, and we don't know a reference for this statement.
\end{rema}

\begin{obse}
Conversely, if $\Spc(SH(\FF_q)^c_\QQ) \cong \Spec(\QQ)$, then the étale realisation functor of Theorem~\ref{theo:etalereal}
\[ R: DM_{gm}(\FF_q)_\QQ^{op} \to D^b_{et}(\FF_q, \QQ_l) \]
is conservative for any $l \nmid q$. That is, for any object $M$, we have $R(M) \cong 0$ if and only if $M \cong 0$.
\end{obse}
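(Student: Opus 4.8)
The plan is to deduce the conservativity of the étale realisation functor $R$ from the hypothesis $\Spc(SH(\FF_q)^c_\QQ) \cong \Spec(\QQ)$ together with the general tensor-triangulated formalism recalled in Section~1. The key point is that $R$ is a $\otimes$-triangulated functor between essentially small $\otimes$-triangulated categories, so by Proposition~\ref{prop:firstProp}(1) it induces a continuous map $\Spc(R) \colon \Spc(D^b_{et}(\FF_q, \QQ_l)) \to \Spc(DM_{gm}(\FF_q)_\QQ)$, and satisfies $\supp(R(M)) = \Spc(R)^{-1}(\supp(M))$ for every object $M$. Recall that an object $M$ of a $\otimes$-triangulated category is zero if and only if $\supp(M) = \emptyset$ (indeed $0$ lies in every prime, so $\supp(0)=\emptyset$; conversely if $\supp(M)=\emptyset$ then $M$ lies in every prime, in particular in the unique prime $(0)$ of $DM_{gm}(\FF_q)_\QQ$, forcing $M\cong 0$, using idempotent completeness). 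So the statement $R(M)\cong 0 \iff M\cong 0$ is equivalent to: $\supp(M)=\emptyset \iff \supp(R(M))=\emptyset$, i.e. $\supp(M)=\emptyset \iff \Spc(R)^{-1}(\supp(M))=\emptyset$.

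First I would record that $\Spc(DM_{gm}(\FF_q)_\QQ)$ is a single point: this is exactly the content of Theorem~\ref{theo:main} combined with Morel's equivalence $SH(\FF_q)^c_\QQ \simeq DM_{gm}(\FF_q)_\QQ$ (Theorem~\ref{theo:morel}), since the assumption $\Spc(SH(\FF_q)^c_\QQ)\cong\Spec(\QQ)$ says precisely that this spectrum has one point. (One must be mildly careful about the variance coming from the ${}^{op}$ in the statement of the observation; since $DM_{gm}(\FF_q)_\QQ$ is rigid by Theorem~\ref{theo:dmtheo}(1), the duality functor is an equivalence $DM_{gm}(\FF_q)_\QQ^{op}\simeq DM_{gm}(\FF_q)_\QQ$, so the spectra of the two sides agree and $M\mapsto M^\vee$ is an equivalence, hence $M\cong 0 \iff M^\vee\cong 0$; alternatively one just works with $R$ as a functor out of $DM_{gm}(\FF_q)_\QQ$ directly.) Next I would observe the only two subsets of a one-point space are $\emptyset$ and the whole space $\{*\}$. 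Therefore for any $M$ either $\supp(M)=\emptyset$, in which case $M\cong 0$ and also $R(M)\cong 0$ (a $\otimes$-functor preserves zero objects); or $\supp(M)=\{*\}$, the whole space. In the latter case $\Spc(R)^{-1}(\{*\}) = \Spc(D^b_{et}(\FF_q,\QQ_l))$, the entire spectrum of the target, so $\supp(R(M))$ equals the whole spectrum of $D^b_{et}(\FF_q,\QQ_l)$; this is empty only if $D^b_{et}(\FF_q,\QQ_l)$ itself is the zero category, which it is not (it contains the nonzero unit object $\QQ_l$). Hence $\supp(R(M))\neq\emptyset$, so $R(M)\not\cong 0$. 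This establishes the biconditional.

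The one genuine subtlety — and what I expect to be the main obstacle — is justifying that $\Spc(D^b_{et}(\FF_q,\QQ_l))$ is nonempty, equivalently that $D^b_{et}(\FF_q,\QQ_l)$ is not the zero category and (to invoke the support formalism cleanly) that it is essentially small. Non-triviality is clear since $\un = \QQ_l \neq 0$; essential smallness of the bounded derived category of constructible $\QQ_l$-sheaves on $\Spec(\FF_q)$ is standard, this category being equivalent to the bounded derived category of finite-dimensional continuous $\QQ_l$-representations of $\hat{\ZZ} = \Gal(\overline{\FF_q}/\FF_q)$, which is essentially small. Once these two facts are in hand, the argument is purely formal: the existence of a $\otimes$-triangulated functor $R$ from $DM_{gm}(\FF_q)_\QQ$ (whose spectrum is a point) to any nonzero essentially small $\otimes$-triangulated category forces $R$ to be conservative, because the preimage of the whole one-point spectrum is the whole (nonempty) target spectrum. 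I would present the proof in exactly this order: recall $\Spc(R)$ and the support formula, reduce to a statement about supports, note the one-point spectrum has only two subsets, and then dispatch the two cases, flagging the essential-smallness and non-triviality of $D^b_{et}$ as the only inputs beyond the formalism already developed.
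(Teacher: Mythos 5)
Your argument is essentially the intended one --- the paper states this as an Observation with no written proof, and the formal tensor-triangulated argument you give is exactly what is meant --- but one step is justified incorrectly, and it is the step carrying the real content. You assert that the unique prime of $DM_{gm}(\FF_q)_\QQ$ is $(0)$ and that $\supp(M)=\emptyset$ forces $M\cong 0$ ``using idempotent completeness''. Neither follows from the hypothesis $\Spc(SH(\FF_q)^c_\QQ)\cong\Spec(\QQ)$ alone: that hypothesis only says there is exactly one prime $\cP$, and in general an object has empty support if and only if it is $\otimes$-nilpotent (Balmer shows $\bigcap_{\cP}\cP=\{a: a^{\otimes n}=0 \text{ for some } n\}$, \cite[Cor.~2.4]{Bal05}), so a priori $\cP$ could contain nonzero objects. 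What closes this gap is rigidity, Theorem~\ref{theo:dmtheo}(1): in a rigid category $a$ is a retract of $a\otimes Da\otimes a$, hence of $(a\otimes Da)^{\otimes n}\otimes a$, so $a^{\otimes n}=0$ implies $a=0$, and therefore the unique prime really is $\{0\}$ and nonzero objects have full support. You do invoke rigidity, but only to handle the ${}^{op}$; it is needed here as well. With that repaired your two-case argument is complete. As for the issue you flag as the ``main obstacle'' (nonemptiness and essential smallness of $\Spc(D^b_{et}(\FF_q,\QQ_l))$): you can avoid the target's spectrum altogether by arguing on the source only. The kernel $\ker(R)=\{M: R(M)\cong 0\}$ is a thick $\otimes$-ideal not containing $\un$ (since $R(\un)=\QQ_l\neq 0$), hence by Balmer's Zorn-type lemma it is contained in some prime, necessarily the unique prime, which is $\{0\}$ by the rigidity argument above; so $\ker(R)=\{0\}$. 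This variant uses nothing about $D^b_{et}(\FF_q,\QQ_l)$ beyond the nonvanishing of the image of the unit.
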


The same is true if the base is the complex numbers, and we use the Betti realisation instead. Consequences of this conservativity are explored in \cite[\S2]{Ayo15}.\footnote{In fact, the Conservativity Conjecture described in \cite{Ayo15} is one of the major open problems in the area. It is a kind of triangulated analogue of the Hodge conjecture.}

%Recall that in a $\QQ$-linear idempotent complete $\otimes$-category, one can define the symmetric product, $S^m(M)$, for any $m \geq 0$ as the image of the idempotent endomorphism $1 / n! \sum_{\sigma \in Sym(n)} \sigma: M^{\otimes n} \to M^{\otimes n}$ where $Sym(n)$ is the symmetric group. The same applies to the alternating product, $\bigwedge^m(M)$, using the idempotent endomorphism $1 / n! \sum_{\sigma \in Sym(n)} \epsilon(\sigma) \sigma$, where $\epsilon(\sigma)$ is the sign of $\sigma$. Notice, that in the category of (unbounded) complexes of $\QQ$-vector spaces, a complex $C$ has finitely many finite dimensional cohomology spaces if and only if it has a decomposition $C = C_\even \oplus C_\odd$ such that $S^mC_\odd$ and $\bigwedge^m C_\even$ vanish for large enough $m$. Consequently, we can define an object $M$ to be \emph{Kimura finite} if it has a decomposition $M = M_\even \oplus M_\odd$ such that $S^mM_\odd$ and $\bigwedge^m M_\even$ vanish for large enough $m$.
%
%\begin{obse}
%Assume that the étale realisation $DM_{gm}(\FF_q)_\QQ^{op} \to D^b_{et}(\FF_q, \QQ_l)$ is conservative.
%\begin{enumerate}
% \item Every motive $M \in DM_{gm}(\FF_q)_\QQ$ is Schur finite.\footnote{An object in a $\QQ$-linear $\otimes$-category is said to be \emph{Schur finite} if there is a nonzero idempotent in the group algebra $\QQ[Sym_m]$ which acts on $A^{\otimes m}$ as zero, for some $m$.} %
% \end{enumerate}
%\end{obse}

\begin{conj}[Cisinski]
If $\Spc(SH(\FF_q)^c_\QQ) \cong \QQ$, then the Beilinson-Parshin conjecture is true, and rational and numerical equivalence agree.
\end{conj}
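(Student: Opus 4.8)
The plan is to reverse the argument of Theorem~\ref{theo:main} as far as soft tensor triangulated geometry allows, and then to isolate the arithmetic input that remains. By Morel's equivalence, Theorem~\ref{theo:morel}, the hypothesis says exactly that $\Spc(DM_{gm}(\FF_q)_\QQ)\cong\Spec(\QQ)$ is a single point, so $DM_{gm}(\FF_q)_\QQ$ has a unique prime. As in the Observation following Theorem~\ref{theo:main} --- using Proposition~\ref{prop:firstProp} together with rigidity of $DM_{gm}(\FF_q)_\QQ$, Theorem~\ref{theo:dmtheo} --- this forces every $\otimes$-triangulated functor $F$ out of $DM_{gm}(\FF_q)_\QQ$ with nonzero target to be conservative: for $a\neq 0$ one has $\supp(a)=\Spc(DM_{gm}(\FF_q)_\QQ)$, hence $\supp(F(a))=\Spc(F)^{-1}(\supp(a))$ is the whole (nonempty) spectrum of the target. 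First I would record the instances I care about: the $l$-adic realisation of Theorem~\ref{theo:etalereal}, and --- granting that Bondarko's weight complex functor $t\colon DM_{gm}(\FF_q)_\QQ\to K^b(\cM_{\rat}(\FF_q)_\QQ)$ is monoidal --- the composite $G$ of $t$ with the numerical quotient $K^b(\cM_{\rat}(\FF_q)_\QQ)\to K^b(\cM_{\num}(\FF_q)_\QQ)$.

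Next I would use a reformulation visible from the proof of Theorem~\ref{theo:main}: the conjunction of the two conclusions is equivalent to a single tensor triangulated equivalence $DM_{gm}(\FF_q)_\QQ\simeq\bigoplus_{i\in\ZZ}\cM_{\num}(\FF_q)_\QQ$. Indeed, Bondarko's criterion \cite{Bon09} says $t$ is an equivalence if and only if the Beilinson--Parshin conjecture holds; Jannsen's theorem, Theorem~\ref{theo:jannsen}, together with Lemma~\ref{lemm:rigssfield}, identifies $K^b(\cM_{\num}(\FF_q)_\QQ)$ with $\bigoplus_{i\in\ZZ}\cM_{\num}(\FF_q)_\QQ$; and the numerical quotient $\cM_{\rat}(\FF_q)_\QQ\to\cM_{\num}(\FF_q)_\QQ$ is an equivalence if and only if $\rat=\num$. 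So it suffices to promote $G$ from a conservative functor (paragraph~1) to an equivalence. Essential surjectivity is the easy part, since $G(M(X))$ is the numerical motive of $X$ placed in degree $0$ and $DM_{gm}(\FF_q)_\QQ$ is generated by the $M(X)$, Theorem~\ref{theo:dmtheo}\eqref{theo:dmtheo:4}; the real content is full faithfulness, i.e. the vanishing $\hom_{DM_{gm}(\FF_q)_\QQ}(M(X),M(Y)[n])_\QQ=0$ for all smooth projective $X,Y$ and all $n\neq 0$ (this is Beilinson--Parshin, via Theorem~\ref{theo:dmtheo}\eqref{theo:dmtheo:2}) together with faithfulness of $G$ restricted to $\cM_{\rat}(\FF_q)_\QQ$ (this is the non-degeneracy of the intersection pairing, i.e. $\rat=\num$). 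The mechanism I would try for the first part is the bounded weight structure on $DM_{gm}(\FF_q)_\QQ$ with heart $\cM_{\rat}(\FF_q)_\QQ$: argue that a one-point spectrum forces it to be ``pure'', so that $DM_{gm}(\FF_q)_\QQ\simeq K^b(\cM_{\rat}(\FF_q)_\QQ)$.

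The main obstacle --- and, I expect, the crux of the whole problem, which is presumably why the statement is a conjecture and not a theorem --- is precisely this promotion from conservativity to full faithfulness. The spectrum $\Spc$ only records thick $\otimes$-ideals, i.e. support and vanishing data, whereas Beilinson--Parshin and $\rat=\num$ are statements about individual $\hom$-groups, and a priori there is no soft passage between the two. Concretely, being rigid with one-point spectrum and $\hom(\un,\un)=\QQ$ does not force a tensor triangulated category to be (the bounded homotopy category of) a semisimple one: $D^b$ of the finite-dimensional $\QQ$-representations of $\GG_a$ is rigid, and its only thick $\otimes$-ideals are $0$ and the whole category --- every nonzero object has the trivial representation as a subquotient, hence generates the unit --- yet $\hom(\un,\un[1])=\QQ\neq 0$, so it is not of this form and the tautological weight structure on it is not pure. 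A proof must therefore feed in something special to $DM_{gm}(\FF_q)_\QQ$, most plausibly the existence of Bondarko's weight structure together with rigidity and the finiteness properties available over $\FF_q$, to exclude such ``unipotent'' behaviour; the single hardest step I foresee is to manufacture a nonzero proper thick $\otimes$-ideal out of a nonzero class in some higher Chow group $CH^i(X,j)_\QQ$ with $j\neq 0$, or out of a nonzero numerically trivial cycle.

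One further, more technical, obstacle: the route through $t$ presupposes monoidality of Bondarko's weight complex functor, which the author explicitly declines to assume. A variant working only with the $l$-adic realisation would replace ``$\rat=\num$'' by the combination of conservativity of the realisation with the agreement of homological and numerical equivalence, and would face the same passage from ideal-theoretic to $\hom$-theoretic information --- over $\CC$ the analogous conservativity of the Betti realisation is Ayoub's Conservativity Conjecture, \cite[\S2]{Ayo15}, of depth comparable to the Hodge conjecture.
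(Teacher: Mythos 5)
The statement you were given is stated in the paper as a conjecture (attributed to Cisinski); the paper offers no proof of it, so there is nothing of the author's to compare your argument against line by line. More importantly, your proposal is --- by your own explicit admission --- not a proof. You correctly reduce the problem, via Morel's equivalence and (granting monoidality of Bondarko's weight complex functor, which the author also declines to assume) the chain of functors through $K^b(\cM_{\rat}(\FF_q)_\QQ)$ and $K^b(\cM_{\num}(\FF_q)_\QQ)$, to promoting a conservative $\otimes$-triangulated functor to a fully faithful one; and you then state that you see no way to perform this promotion. That is precisely the gap, and it is the entire content of the conjecture: the hypothesis $\Spc(SH(\FF_q)^c_\QQ)\cong\Spec(\QQ)$ only constrains the lattice of thick $\otimes$-ideals, whereas the two conclusions are statements about individual $\hom$-groups --- the vanishing of $CH^i(X,j)_\QQ$ for $j\neq 0$ and the nondegeneracy of the intersection pairing. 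Your own example ($D^b$ of finite-dimensional $\QQ$-representations of $\GG_a$: rigid, one-point spectrum, $\hom(\un,\un)=\QQ$, yet $\hom(\un,\un[1])\neq 0$) shows that no purely tensor-triangulated argument can close this gap, so the ``mechanism'' you propose (a pure weight structure forced by a one-point spectrum) is refuted by your own counterexample unless some input genuinely specific to motives over $\FF_q$ is supplied --- and none is.

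So the verdict is: your reductions and your diagnosis of the difficulty are sound and consistent with the remarks the author places around the conjecture (the equivalence-criteria for the weight complex functor, the link to conservativity of realisations, and the comparison with Ayoub's conservativity conjecture), but the proposal must be read as an explanation of why the statement is open, not as a proof of it. One minor point worth tightening if you keep the counterexample: the claim that every nonzero object of $D^b(\mathrm{Rep}_\QQ(\GG_a))$ generates the unit ideal is most cleanly seen not via subquotients but via rigidity in characteristic zero --- for a nonzero representation $V$ the composite $\un\to V\otimes V^\vee\to\un$ is multiplication by $\dim V$, so $\un$ is a direct summand of $V\otimes V^\vee$, which lies in any nonzero thick $\otimes$-ideal; an additional argument (e.g.\ reduction to cohomology objects) is then needed for general complexes.
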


\subsection{The structural morphism}

\begin{prop} \label{prop:epi}
Let $q$ be an odd prime power %, let $\eta: \AA^2{-}\{0\} \to \PP^1$ be the Hopf element, cf. Remark~\ref{rema:kmwdescr}, 
and consider the open-closed decomposition of the topological space $\Spc(SH(\FF_q)^c) = U(\PP^2) \cup \supp(\PP^2)$, cf. Definition~\ref{defi:basicDef}\eqref{defi:basicDef:supp}, where $\PP^2$ is pointed at any rational point. The canonical morphism of \emph{topological spaces}
\[ \supp(\PP^2) \to \Spec(\ZZ), \qquad \textrm{resp. } U(\PP^2) \to \Spec(\ZZ) \]
is surjective, resp. has image $(2) \in \Spec(\ZZ)$. Moreover, the closure of $U(\PP^2)$ in $\Spc(SH(\FF_q)^c)$ intersects $\supp(\PP^2)$ nontrivially.
\end{prop}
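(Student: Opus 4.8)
The plan is to analyze the map $\Spc(SH(\FF_q)^c) \to \Spec(\ZZ)$, which is the structural map $\rho_\cK$ of Theorem~\ref{theo:structureMap} for $\cK = SH(\FF_q)^c$, using the fact that $\hom_{SH(\FF_q)}(\un,\un) = GW(\FF_q) \cong K_0^{MW}(\FF_q) \cong \ZZ \oplus \ZZ/2$ from Example~\ref{exam:finiteFieldMW}, and noting $\Spec(GW(\FF_q)) \cong \Spec(\ZZ)$ as a topological space (the rank map $GW(\FF_q) \to \ZZ$ is surjective with nilpotent kernel when $q \equiv 1 \bmod 4$, and has kernel generated by an idempotent-like element when $q \equiv 3 \bmod 4$; either way $\Spec$ is $\Spec(\ZZ)$). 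The key computational input is $\PP^2$ pointed at a rational point, so that $\Sigma^\infty(\PP^2,x)_+ \simeq \un \oplus T \oplus T^2$ in $SH(\FF_q)$ by the cell structure of projective space, cf. Remark~\ref{rema:kmwdescr}; thus $U(\PP^2) = U(T \oplus T^2) = U(T)$ since $T = \Cone(\un \to \Sigma^\infty(\PP^1,\infty)_+)$ is tensor invertible and hence $U(T) = \emptyset$ would follow unless... no: $T$ invertible does \emph{not} force $U(T)$ empty in general, but one checks $\supp(T) = \Spc(SH(\FF_q)^c)$ by Proposition~\ref{prop:firstProp}\eqref{prop:firstProp:closed} applied to $T$, so $U(T) = \emptyset$? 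That contradicts the statement, so the right reading is $U(\PP^2) = U(T) \cap U(T^2)$... Actually $\PP^2 = \un \oplus T \oplus T^2$ means $a \in \cP$ for $a = \Sigma^\infty \PP^2_+$ iff $\un \in \cP$, which is never; so instead one must use the \emph{reduced} motive or the comparison with $\rho_\cK$ directly. Let me instead use $\rho_\cK$: the principal open $D(s) \subseteq \Spec(GW(\FF_q))$ for a morphism $s: \un \to \un$ pulls back to $U(\Cone(s))$; the relevant $s$ is multiplication by the class $\langle\langle -1\rangle\rangle$ or $2 - h$, whose cone is built from $\PP^2$.

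Concretely, first I would identify the morphism underlying the open–closed decomposition: write $h \in GW(\FF_q)$ for the hyperbolic form and consider $s = 2 - h \in \hom(\un,\un)$ (or the appropriate multiple), observing $\Cone(\un \xrightarrow{s} \un)$ is a summand of $\Sigma^\infty \widetilde{\PP^2}$; then by Theorem~\ref{theo:structureMap} the preimage $\rho_\cK^{-1}(D(s)) = U(\Cone(s)) = U(\PP^2)$ and its complement is $\supp(\PP^2) = \rho_\cK^{-1}(V(s))$. Second, I would compute $V(s), D(s) \subseteq \Spec(GW(\FF_q)) = \Spec(\ZZ)$: since $2 - h$ generates (up to torsion/units) the augmentation ideal $I(\FF_q)$, and $I(\FF_q)$ has support exactly $\{(2)\}$ (as $W(\FF_q)$ is $2$-torsion away from $(2)$ when $q\equiv 1$, or $\ZZ/4$ when $q \equiv 3$, so $I^2 = 0$ or small), we get $D(s) = \Spec(\ZZ) \setminus \{(2)\} $... but the Proposition claims $U(\PP^2)$ has image exactly $(2)$, i.e. $D(s)$ maps to... so actually $V(s) = \{(2)\}$ up to the whole space and I have the roles reversed; I would carefully recompute which of $\Cone(2-h)$ versus $\Cone(h-2)$ vanishes where, using $h^2 = 2h$ so $s^2 = (2-h)^2 = 4 - 4h + h^2 = 4 - 2h = 2s$, giving $s(s-2) = 0$, hence $s$ is (twice) an idempotent: this pins down $V(s)$ and $D(s)$ precisely. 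Third, for surjectivity of $\supp(\PP^2) \to \Spec(\ZZ)$ I combine Theorem~\ref{theo:structureMap} (surjectivity of $\rho_\cK$, valid since $\hom_{SH(\FF_q)}(\un[i],\un)=0$ for $i<0$ by Theorem~\ref{theo:motStabHomGrps}) with the fact that $V(s) = \rho_\cK(\supp(\PP^2))$ contains all primes of $\Spec(\ZZ)$ except possibly finitely many, then handle the residual primes by hand using that $\un \notin \cP$ always puts $\cP$ in $\supp(\PP^2)$ when $s \in \cP$, and $s$ lands in many primes.

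For the last sentence — that $\overline{U(\PP^2)} \cap \supp(\PP^2) \neq \emptyset$ — I would argue that $U(\PP^2)$ is a nonempty quasi-compact open (nonempty because it contains, e.g., the preimage of $(2)$, which is nonempty by surjectivity of $\rho_\cK$ onto $\Spec(GW(\FF_q))$ restricted appropriately — or by Proposition~\ref{prop:firstProp}\eqref{prop:firstProp:nil}, since $s^{\otimes n} = 2^{n-1} s \neq 0$ for all $n$, so $\Cone(s)$ lies in some prime) yet $U(\PP^2) \neq \Spc(SH(\FF_q)^c)$ (as $\Spc$ is not disconnected into the $\Spec(\ZZ)$-pieces naively — the torsion element $\eta$ glues things), and then invoke that $\Spc(\cK)$ is spectral hence every nonempty locally closed subset that is not closed has closure meeting its complement; more precisely, if $\overline{U(\PP^2)} \subseteq U(\PP^2)$ then $U(\PP^2)$ is clopen, forcing a product decomposition $SH(\FF_q)^c \simeq \cK_1 \times \cK_2$, which I would contradict by exhibiting a nonzero nilpotent or a connecting element (again $\eta \in \hom(\un[-1],T^{-1})$, using $(\PP^1,\infty)$-stability to relate it to self-maps of $\un$) linking the two putative factors. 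The main obstacle is this last step: ruling out the clopen splitting rigorously requires knowing something about $\Spc(SH(\FF_q)^c)$ beyond its image in $\Spec(\ZZ)$ — specifically that the mod-$2$ and rational parts are genuinely glued — and I expect to need either the $\eta$-inverted category $SH(\FF_q)[\eta^{-1}]$ and its relation to Witt theory, or an explicit pair of nonzero composable maps $\un \to a \to \un$ through an object $a$ detected on both sides; assembling this cleanly, rather than the routine $\Spec(\ZZ)$-bookkeeping of the first two parts, is where the real work lies.
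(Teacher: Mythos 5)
There is a genuine gap, and it sits at the very first step: the identification of the object $\PP^2$. The splitting $\Sigma^\infty(\PP^2,x)\simeq \un\oplus T\oplus T^2$ you start from holds in $DM_{gm}(\FF_q)$ (or rationally), but it is \emph{false} in $SH(\FF_q)$: the reduced $\PP^2$ is $\Cone(\eta)$, where $\eta\colon\un[-1]\to T^{-1}$ is the Hopf map (Mayer--Vietoris for the cover $\{\PP^2{-}\{x\},\,\PP^2{-}\PP^1\}$), and this extension of $T^2$ by $T$ is non-split precisely because $\eta\neq 0$. You notice the resulting contradiction ($U(\PP^2)$ would be empty) but resolve it by substituting $\Cone(s)$ for $s=2-h\colon\un\to\un$; this substitution fails. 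First, $\Cone(2-h)$ is not a summand of the reduced $\PP^2$ (the cone of $\eta$ is a cone on a map of invertible objects of different weights, not on a self-map of $\un$). Second, your own computation kills the plan: $s=1-\langle -1\rangle$ lies in the $2$-torsion part of $GW(\FF_q)\cong\ZZ\oplus\ZZ/2$ (it is literally $0$ when $q\equiv 1\bmod 4$), so $s^{\otimes 2}=2s=0$; by Proposition~\ref{prop:firstProp}\eqref{prop:firstProp:nil} this gives $U(\Cone(s))=\emptyset$ in all cases, whereas the proposition needs $U(\PP^2)\neq\emptyset$. The information you need simply is not visible in the degree-zero ring $GW(\FF_q)$: the paper's proof runs through the full graded ring $K^{MW}(\FF_q)$. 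Concretely: (i) $U(\PP^2)\neq\emptyset$ because $\eta$ is non-nilpotent in $K^{MW}(\FF_q)$ (Example~\ref{exam:finiteFieldMW}), again via Proposition~\ref{prop:firstProp}\eqref{prop:firstProp:nil}; (ii) $U(\PP^2)\cong\Spc\bigl(SH(\FF_q)^c/\langle\PP^2\rangle\bigr)$, and inverting $\eta$ forces $4=0$ (the negative $K^{MW}$-groups are $\ZZ/4$ or $\ZZ/2[\epsilon]/\epsilon^2$), so $4\cdot\id_\un=0$ in the quotient and the image in $\Spec(\ZZ)$ is exactly $\{(2)\}$. Neither step has a counterpart in your outline.

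Your treatment of the final claim is closer to the mark: ruling out a clopen decomposition $\Spc(SH(\FF_q)^c)=U(\PP^2)\sqcup\supp(\PP^2)$ by producing a forbidden idempotent decomposition of the endomorphism ring of $\un$ is exactly the paper's argument, and it is already complete at that point --- since $GW(\FF_q)[1/p]_{red}\cong\ZZ[1/p]$ has no non-trivial idempotents, no ``connecting element'' or $\eta$-inverted Witt-theoretic input is required beyond knowing that \emph{both} pieces are non-empty, which is precisely point (i) above that your proposal cannot supply. Note also that once (ii) and the non-clopen statement are in hand, surjectivity of $\supp(\PP^2)\to\Spec(\ZZ)$ is immediate: every prime of $\Spec(\ZZ)$ other than $(2)$ is hit by $\supp(\PP^2)$ because the whole space surjects onto $\Spec(GW(\FF_q))\cong\Spec(\ZZ)$ while $U(\PP^2)$ lands in $\{(2)\}$; and $(2)$ itself is hit because a point of $\supp(\PP^2)$ lying in the closure of $U(\PP^2)$ must map into the closure of $\{(2)\}$, which is $\{(2)\}$. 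Your proposed ``handle the residual primes by hand'' dissolves into this once the correct identification of $\PP^2$ is made.
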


\begin{proof}
We begin with $U(\PP^2)$. First we claim that $U(\PP^2)$ is non-empty, cf. \cite[Proof of Prop.10.4]{BalSSS10}. Recall that $\eta: \un[-1] \to T^{-1}$ is the morphism induced by the canonical morphism $\AA^2{-}\{0\} \to \PP^1$, and note that $\PP^2 \cong Cone(\eta)$ by the Mayer-Vietoris distinguished triangle associated to $\{\PP^2 {-}\{ x\} \to \PP^2, \PP^2{-}\PP^1 \to \PP^2\}$ (and $\AA^1$-invariance) where $x$ is any rational point not in $\PP^1$. So by Proposition~\ref{prop:firstProp}\eqref{prop:firstProp:nil} the set $U(\PP^2)$ is non-empty if and only if $\eta^{\otimes n}$ is nonzero for all $n > 0$. But this latter follows from the fact that $\eta$ is not nilpotent in $K^{MW}(\FF_q)$, Theorem~\ref{theo:motStabHomGrps}, Example~\ref{exam:finiteFieldMW}, (this nonnilpotence is true for any field \cite[Cor.6.4.5,p.258]{Mor04MotPi}).

By Proposition~\ref{prop:firstProp}\eqref{prop:firstProp:open} the subspace $U(\PP^2)$ is isomorphic to $\Spc\left ( SH(\FF_q)^c / \langle \PP^2 \rangle \right )$. Since $\PP^2 \cong Cone(\eta)$, the morphism $\eta$ is invertible in $SH(\FF_q)^c / \langle \PP^2 \rangle$, and therefore the canonical morphism of graded rings $\oplus_{n \in \ZZ} \hom_{SH(\FF_q)}(\un[n], T^n) \to \oplus_{n \in \ZZ} \hom_{SH(\FF_q) / \langle \PP^2 \rangle}(\un[n], T^n)$ factors through the localised ring $\oplus_{n \in \ZZ} \hom_{SH(\FF_q)}(\un[n], T^n)[\eta^{-1}]$. It follows from the description of Example~\ref{exam:finiteFieldMW} that $4$ is zero in this latter. Hence $4 \cdot \id_\un$ is zero in $SH(\FF_q) / \langle \PP^2 \rangle$ and therefore the canonical morphism of topological spaces $U(\PP^2) \cong \Spc(\left ( SH(\FF_q) / \langle \PP^2 \rangle \right )^c) \to \Spec(\ZZ)$ given by Theorem~\ref{theo:structureMap} factors through $\Spec(\ZZ / 2) \subset \Spec(\ZZ)$.

Now consider $\supp(\PP^2)$. Using Theorem~\ref{theo:structureMap} and Theorem~\ref{theo:motStabHomGrps} again, we find a surjective morphism of topological spaces 
\[ \Spc(SH(\FF_q)^c) \to \Spec(GW(\FF_q)) \stackrel{\textrm{Exm.\ref{exam:finiteFieldMW}}}{\cong} \Spec(\ZZ). \]
We have seen that the open subspace $U(\PP^2)$ is sent to the closed subspace $(2)$. So to see that $\supp(\PP^2) \to \Spec(\ZZ)$ is still surjective, it suffices to show that $\supp(\PP^2)$ intersects the closure of $U(\PP^2)$ nontrivially. That is, it suffices to show that the topological space $\Spc(SH(\FF_q)^c)$ is not the disjoint union of the topological spaces $U(\PP^2)$ and $\supp(\PP^2)$. Such a decomposition would induce a decomposition of the ring $End_{SH(\FF_q)^c_{\ZZ[1/p]}}(\un)$ into the direct product of the non-zero rings $End_{U(\PP^2)}(\un)[1/p]$ and $End_{\supp(\PP^2)}(\un)[1/p]$, \cite[Theorem 2.11]{Bal07}, \cite[Riou's Appendix B]{LYZ13}. %nonzero because spaces non-empty
Since $1 \in GW(\FF_q)[1/p]$ is not a sum of two non-zero idempotents, this is not possible. 
\end{proof}

\begin{coro} \label{coro:compMapEpi}
Let $q$ be an odd prime power. The canonical morphism
\[ \Spc(SH(\FF_q)^c) \to \Spec^h(K^{MW}(\FF_q)) \]
of Theorem~\ref{theo:structureMap} and Theorem~\ref{theo:motStabHomGrps} is surjective.
\end{coro}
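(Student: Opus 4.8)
\emph{Plan.} The strategy is to combine Proposition~\ref{prop:epi} with an explicit computation of the target $\Spec^h(K^{MW}(\FF_q))$; this latter is the ``approach due to Ormsby'' alluded to in the introduction. Write $\cK = SH(\FF_q)^c$, take $u = T[-1]$, and use Theorem~\ref{theo:motStabHomGrps} to identify $\oplus_n\hom_\cK(\un, u^{\otimes n}) = \oplus_n\hom_\cK(\un[n], T^n) \cong K^{MW}_\bullet(\FF_q)$, so that the map in question is $\rho^\bullet_\cK\colon \Spc(\cK) \to \Spec^h(K^{MW}(\FF_q))$. The element $\eta \in K^{MW}_{-1}(\FF_q)$ is a morphism $\un \to u^{-1}$ with $\Cone(\eta) \cong \PP^2$, so by the last assertion of Theorem~\ref{theo:structureMap} we have $(\rho^\bullet_\cK)^{-1}(D(\eta)) = U(\PP^2)$ and hence $(\rho^\bullet_\cK)^{-1}(V(\eta)) = \supp(\PP^2)$; that is, the open--closed decomposition of $\Spc(\cK)$ used in Proposition~\ref{prop:epi} is the pullback along $\rho^\bullet_\cK$ of the decomposition $\Spec^h(K^{MW}(\FF_q)) = D(\eta) \sqcup V(\eta)$.

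The next step is to compute these two pieces. The closed part is $V(\eta) \cong \Spec^h\bigl(K^{MW}_\bullet(\FF_q)/(\eta)\bigr) = \Spec^h(K^M_\bullet(\FF_q))$, and since $\FF_q$ is finite $K^M_{\geq 2}(\FF_q) = 0$, so the ideal $K^M_{\geq 1}$ is nilpotent and $\Spec^h(K^M_\bullet(\FF_q)) \cong \Spec(K^M_0(\FF_q)) = \Spec(\ZZ)$. Moreover the degree-zero part of the homogeneous prime over $(\ell)$ is the preimage of $\ell\ZZ$ under the surjection $GW(\FF_q) = K^{MW}_0(\FF_q) \twoheadrightarrow K^M_0(\FF_q) = \ZZ$ (reduction mod $\eta$, which is the rank map since its kernel $\ZZ\cdot\eta[\omega]$ is the nilpotent subgroup $\ZZ/2$ of $GW(\FF_q)$), so passing to degree-zero parts identifies $V(\eta)$ with $\Spec(GW(\FF_q))$ under the isomorphism $\Spec(GW(\FF_q)) \cong \Spec(\ZZ)$ of Example~\ref{exam:finiteFieldMW}; in particular it is injective on $V(\eta)$. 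The open part is $D(\eta) \cong \Spec^h\bigl(K^{MW}_\bullet(\FF_q)[\eta^{-1}]\bigr)$, and from Example~\ref{exam:finiteFieldMW} together with the exact sequence~\eqref{equa:MWses} one reads off $K^{MW}_\bullet(\FF_q)[\eta^{-1}] \cong W(\FF_q)[\eta^{\pm 1}]$ with $\eta$ invertible in degree $-1$. Since $W(\FF_q)$ is local Artinian for $q$ odd (it is $\ZZ/4$ or $\FF_2[\epsilon]/(\epsilon^2)$), $\Spec^h\bigl(W(\FF_q)[\eta^{\pm 1}]\bigr) \cong \Spec(W(\FF_q))$ is a single point, say $\p_\eta$. (Compare the classification of \cite{Tho15}.)

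It remains to assemble. The unique point $\p_\eta$ of $D(\eta)$ lies in the image of $\rho^\bullet_\cK$ because $(\rho^\bullet_\cK)^{-1}(\{\p_\eta\}) = U(\PP^2)$ is non-empty by Proposition~\ref{prop:epi} (equivalently, $\eta$ is not nilpotent in $K^{MW}(\FF_q)$). For the closed part, recall from Theorem~\ref{theo:structureMap} (and \cite{BalSSS10}) that $\rho_\cK$ is $\rho^\bullet_\cK$ followed by the map $\Spec^h(K^{MW}(\FF_q)) \to \Spec(GW(\FF_q))$ taking degree-zero parts. Proposition~\ref{prop:epi} tells us that the restriction $\rho_\cK|_{\supp(\PP^2)}\colon \supp(\PP^2) \to \Spec(GW(\FF_q))$ is surjective; since $\rho^\bullet_\cK(\supp(\PP^2)) \subseteq V(\eta)$ and the degree-zero map is injective on $V(\eta)$, we conclude $\rho^\bullet_\cK(\supp(\PP^2)) = V(\eta)$. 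Hence the image of $\rho^\bullet_\cK$ contains $V(\eta) \cup \{\p_\eta\} = \Spec^h(K^{MW}(\FF_q))$, which is the claim.

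I expect the main obstacle to be the explicit computation of $\Spec^h(K^{MW}(\FF_q))$ in the second step---in particular identifying the $\eta$-inverted ring with $W(\FF_q)[\eta^{\pm1}]$ and checking that passing to degree-zero parts matches the homogeneous primes with the primes of $GW(\FF_q)$. Once this is in place, the surjectivity is a formal consequence of Proposition~\ref{prop:epi} and Theorem~\ref{theo:structureMap}.
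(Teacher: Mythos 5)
Your proof is correct, and it follows the same basic strategy as the paper: pin down $\Spec^h(K^{MW}(\FF_q))$ explicitly and then hit each point using the three outputs of Proposition~\ref{prop:epi}. The difference is in the bookkeeping, and yours is arguably the more robust version. The paper simply lists the homogeneous primes (as $([\omega],\eta,p)$ for $p$ odd, $([\omega],2)$, and $([\omega],\eta)$) and treats them case by case, using the non-emptiness of $U(\PP^2)$ for the unique prime not containing $\eta$, the surjectivity of $\rho$ for the primes $([\omega],\eta,p)$, and the statement that $\supp(\PP^2)$ meets $\overline{U(\PP^2)}$ for the remaining prime. You instead organise everything around the decomposition $\Spec^h = D(\eta)\sqcup V(\eta)$, observe that it pulls back to $U(\PP^2)\sqcup\supp(\PP^2)$, and dispose of all of $V(\eta)$ at once: since the degree-zero map is injective on $V(\eta)$ and $\rho|_{\supp(\PP^2)}$ surjects onto $\Spec(GW(\FF_q))$, every prime containing $\eta$ --- including the closed point over $2$ and the generic point $([\omega],\eta)$ --- is in the image. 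This buys you two things. First, you never need the closure statement of Proposition~\ref{prop:epi}; a specialisation argument from the single point of $D(\eta)$ can only reach primes containing $([\omega],2,\eta)$, i.e.\ the maximal prime over $2$, not the minimal prime $([\omega],\eta)$, so your uniform treatment of $V(\eta)$ via degree-zero parts is the cleaner way to catch the generic point. Second, your argument automatically covers the prime $([\omega],\eta,2)$ as well, which sits over $2$ and contains $\eta$ and is easy to lose track of in a case-by-case list. Two small points of care that you handled correctly but should keep explicit: the identity $\rho_\cK = (\text{degree-zero part})\circ\rho^\bullet_\cK$ is not literally stated in Theorem~\ref{theo:structureMap} and needs the citation to \cite{BalSSS10} that you give; and the identification $K^{MW}(\FF_q)[\eta^{-1}]\cong W(\FF_q)[\eta^{\pm1}]$, while exactly what Example~\ref{exam:finiteFieldMW} and the sequence~\eqref{equa:MWses} give (and consistent with \cite{Tho15}), is the one computation you rightly flag as the real content of the target-side calculation.
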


\begin{rema}
This surjectivity is proven for a general field by Heller and Ormsby in \cite{HO16} using the classification of the prime ideals of $K^{MW}$ from \cite{Tho15}. It was Ormsby's idea of using this classification which lead to our proof.
\end{rema}

\begin{proof}
One can classify the homogenous prime ideals of $K^{MW}(\FF_q)$ by hand%
\footnote{They are in bijection with the homogeneous prime ideals of $K^{MW}(\FF_q)_{red} \cong \ZZ[\eta] / 2 \cdot \eta$. Then we have $\Spec(\ZZ[\eta] / 2 \cdot \eta) = \Spec(\ZZ) \cup \Spec(\ZZ / 2[\eta])$. Clearly, apart from $\Spec(\ZZ / 2) = \Spec(\ZZ) \cap \Spec(\ZZ / 2[\eta])$, the graded ring $\ZZ / 2[\eta]$ has exactly one other homogeneous prime: $(\eta)$.} %
using the description in Example~\ref{exam:finiteFieldMW}, or consult \cite{Tho15}, and find that they are: $([\omega], \eta, p)$ with $p \in \ZZ$ an odd prime, $([\omega], 2)$, and $([\omega], \eta)$.

It follows from $U(\PP^2) = U(Cone(\eta))$ being non-empty that $([\omega], 2)$ is in the image of $\Spc(SH(\FF_q)^c)$. The primes $([\omega], \eta, p)$ with $p \in \ZZ$ an odd prime are in the image by the surjectivity of $\Spc(SH(\FF_q)^c) \to \Spec(\ZZ)$. Finally, $([\omega], \eta)$ is in the image by the claim that $\supp(\PP^2)$ intersects the closure of $U(\PP^2)$ nontrivially.
\end{proof}

\subsection{Equivariant stable homotopy theory} \label{subsec:equi}

Much information about $SH(\RR)$ comes from the realisation functor, $SH(\RR) \to SH_{\ZZ / 2}$, towards the $\ZZ / 2$-equivariant stable homotopy category, cf. Theorem~\ref{theo:motStabHomGrps}\eqref{theo:motStabHomGrps:split}. The étale homotopy type, \cite{AM69}, \cite{Fri73}, provides a functor from $\Sm(S)$ to the homotopy category $\Ho(\ProSS)$ of pro-finite simplicials sets. We hope that this induces a tensor triangulated functor towards an appropriate $Gal(\FF_q)$-equivariant stable homotopy category. 
%This (should)\footnote{Strangely, this doesn't appear to be written down anywhere in the literature.} induce a tensor triangulated functor $SH(\FF_q)_{\ZZ_{(l)}} \to \Ho(\ProSS_{\ZZ_{(l)}, \hat{\ZZ}})[(\PP^1, \infty)^{-1}]$ towards the homotopy category of $\ZZ_{(l)}$-local $\hat{\ZZ}$-equivariant pro-simplicial sets, with the image of $\PP^1$ pointed at infinity tensor inverted. 
The structure (as a set) of the spectrum of the equivariant stable homotopy category, $\Spc(SH_G^c)$, of a finite group $G$ has recently been completely determined by Balmer-Sanders, \cite{BS15}. They also describe much about its topology. This suggests that the $Gal(\FF_q)$-equivariant stable homotopy category mentioned above has a good chance of being accessible, and providing information about the structure of $\Spc(SH(\FF_q)^c_{\ZZ_{(l)}})$.

\subsection{Final observations}

Recall the following.

\begin{enumerate}
 \item It seems highly likely that 
 \[ \Spc(SH(\FF_q)^c_\QQ) \cong \Spec(\QQ). \]

 \item Let $\FF$ be the algebraic closure of the finite field with an odd number of elements. The endomorphism ring of the unit $GW(\FF) \cong End_{SH(\FF)}(\un)$ and the graded endomorphism ring $K^{MW}(\FF) \cong \oplus_{n \in \ZZ} \hom_{SH(\FF)}(\un[n], T^{n})$ are extremely simple, especially if we invert 2, and ignore nilpotents. Indeed, 
 \[ K^{MW}(\FF)[1/2]_{red} \cong \ZZ[1/2]. \]
 
 \item Due to the realisation functor, for a subfield $k \subseteq \CC$, there is a retraction
 \[ \xymatrix{\Spc(\SH_\top^c) \ar[r] \ar@/^12pt/[rr]^{\cong} & \Spc(SH(k)^c) \ar[r] & \Spc(\SH_\top^c)}.
 \]
 It seems likely that the étale realisation should give rise to a similar phenomenon for all fields.
\end{enumerate}
% 
% \item There is a relationship between primes of $SH(\FF_q)^c$ and cohomology theories $H^\bullet$ satisfying a Künneth formula, i.e., such that there are isomorphisms $H^n(X \times Y) \cong \oplus_{p + q = n} H^p(X) \otimes H^q(X)$. ***Kunneth for smooth varieties implies Kunneth for all compact objects? *** Indeed, if $\cE \in SH(\FF_q)$ is an object such that $H^n(-, \cE) \stackrel{def}{=} \hom(-, \cE[n])$ satisfies the Kunneth formula, then the kernel of $H^\bullet(-, \cE)$ is a prime *** needed boundedness for this to start induction?*** 
% 
% We would like to say that the converse is true---that any prime $\cP$ gives rise to a cohomology theory but *** to finish *** problem with defining residue fields ***
% 
% \item The Tate conjecture 

Based on these observations, lets 
%Motivated by the simpleness of $K^{MW}_\ast(\FF)$, the description given in Theorem~\ref{theo:main}, lets 
make the following wild speculation. We have taken the algebraic closure of the base field to avoid equivariant phenomena which might appear, as discussed in Section~\ref{subsec:equi}, but one could also ask if the case of a finite field base is just the appropriate combination of the $\FF$-base case together with $\Gal(\FF / \FF_p)$-equivariant phenomena, cf. Example~\ref{exam:spc}\eqref{exam:spc:equi}.

\begin{gues} \label{gues}
Let $\FF$ be an algebraic closure of a finite field with $p$ elements. Then the canonical constant presheaf functor $SH_{\top} \to SH(\FF)$ from the classical stable homotopy category, cf. Equation~\eqref{equa:constFunctor}, induces an isomorphism
\[ \Spc(SH(\FF)^c_{\ZZ_{(l)}}) \to \Spc((SH_{\top}^c)_{\ZZ_{(l)}}) \]
whenever $l$ and $p$ are odd.
\end{gues}

%e.g., explicitly comment that the simplicity of the milnor witt k-theory suggests SH is also very simple
% relationship of primes to cohomology theories with kunneth, comment that one direction is easy but the other direction is not obvious (Balmer), and that Tate also suggests there are no primes over a finite field (Cisinski)
%Spc of etale DM with finite coefficients, obtained by inverting bott element so any other primes are in the "kernel" of the bott element. Also, etale realisation does give any useful information here 
% K-theory motives of Bondarko? 

\bibliographystyle{alpha}
\bibliography{bib}

\end{document}